\newtheorem{thm}{Theorem}[section]
\newtheorem{thmx}{Theorem}
\newtheorem{cor}[thm]{Corollary}
\newtheorem{lem}{Lemma}[section]
\newtheorem{case}{Case}
\newtheorem{subcase}{Subcase}
\newtheorem*{conj*}{Denjoy's Conjecture}
\theoremstyle{definition}
\newtheorem{rem}{Remark}[section]
\newtheorem{Que}{Question}
\newtheorem{exa}{Example}[section]
\numberwithin{equation}{section}
\begin{document}

\title{A study on entire functions sharing one   function with their difference operators   and its application}

\author[1]{\fnm{Xuxu} \sur{Xiang}}\email{xiangxuxu@gznu.edu.cn}

\author*[1]{\fnm{Jianren} \sur{Long}}\email{longjianren2004@163.com}

\affil[1]{\orgdiv{School of Mathematical Sciences}, \orgname{Guizhou Normal University}, \orgaddress{\city{Guiyang}, \postcode{550025}, \country{ P. R. China}}}

\abstract{
	
Let $f$ be a transcendental entire function with hyper-order strictly less than 1 and having a Borel exceptional small function. If $f$ and $\Delta^n f$, or $f'$ and $f(z+1)$, share a function CM, then the exact form of $f$ is determined, which improves the previous results given by L\"u et al. [Results Math. 74, article number 30 (2019)] and Liu et al. [Bull. Korean Math. Soc. 51, 1453-1467 (2014)]. As an application, the relationship between $f$ and $\Delta^n f$ is established under the condition that they share a finite set, which partially resolves Liu's question raised in [J. Math. Anal. Appl. 359, 384-393 (2009)]. Furthermore, several  examples are presented to demonstrate these results.
 }

\keywords{Br\"uck conjecture; Sharing set; Difference operator; Entire function; sharing a function.}

\pacs[2020 MSC]{30D35, 30D20}

\maketitle

\section{Introduction}\label{sec1}
Let $f$ be a meromorphic function in the complex plane $\mathbb{C}$. Assume that the reader is familiar with the standard notation and basic results of Nevanlinna theory, such as $m(r,f),~N(r,f)$,$~T(r,f)$, see \cite{Hayman} for more details.  A meromorphic
function $g$ is said to be a small function of $f$ if $T(r,g)=S(r,f)$, where $S(r,f)$  denotes any quantity
that satisfies $S(r,f)= o(T(r, f))$ as $r$ tends to infinity, outside a possible exceptional set of finite linear measure. $\rho(f)=\underset{r\rightarrow \infty}{\lim\sup}\frac{\log^+T(r,f)}{\log r}$ and   $\rho_2(f)=\underset{r\rightarrow \infty}{\lim\sup}\frac{\log^+\log^+T(r,f)}{\log r}$  are used to denote the order and the hyper-order  of $f$, respectively. Define $\lambda(f)$ as the exponent of convergence of the zeros sequences of $f$, and define $\mu(f)$ as the lower order of $f$.
 The $n$th difference
operator of $f$ is defined by
$\Delta_c^n f(z) = \Delta_c^{n-1} \big( \Delta_c f(z) \big) = \sum_{i=0}^{n} (-1)^{n-i} \binom{n}{i} f(z + ic)$, where $c$ is a constant.

Due to Nevanlinna's second main theorem, Nevanlinna \cite{Nevanlinna} established the five-value theorem: If two non-constant meromorphic functions $f$ and $g$ share five distinct values IM (ignoring multiplicities), then $f = g$.
If $f$ and $g$ share four distinct values CM (counting multiplicities), according to Nevanlinna four-value theorem \cite{Nevanlinna}, it implies that $f$ can be transformed into $g$ through a M\"obius transformation.
Furthermore, the assumption of 4 CM in Nevanlinna four-value theorem has been improved to 2 CM + 2 IM by Gundersen \cite{Gundersen1}. However, the assumption of 4 CM cannot be improved to 4 IM \cite{Gundersen2}. For further details, refer to \cite{yang2003}.

The number of shared values can be reduced when $f$ and $g$ are related. Rubel et al. \cite{Rubel} showed that if a non-constant entire function $f$ and its first derivative $f'$ share two distinct values CM, then they are identical.
Subsequently, Mues et al. \cite{Mues} and Gundersen \cite{Gundersen} extended this result to meromorphic functions.
Regarding the case where $f$ and $f'$ share one finite value CM, Br\"uck\cite{B} raised the following conjecture.

\vspace{4pt}

 \noindent{\bf Br\"uck conjecture.}  Let $f$ be a non-constant entire function with finite hyper-order $\rho_2(f)\notin \mathbb{N}$. If $f$ and $f'$ share one finite
value $a$ CM, then $f-a=c(f'-a)$, where  $c$ is a non-zero constant.
\vspace{4pt}

The Br\"uck conjecture has not been fully solved up to the present. In the case where $f$ is of finite order, the Br\"uck conjecture has been resolved by Gundersen et al. \cite{Gundersen3}.  Cao \cite{Cao} showed that the Br\"uck conjecture is also true when $\rho_2(f) < \frac{1}{2}$.

Recently, the difference analogues of Nevanlinna theory have been established \cite{Chiang,Halburd,Halburd1}. These analogues serve as a powerful theoretical tool for studying the uniqueness problems of meromorphic functions, considering their shifts or delay-differential, see \cite{H1,H2}, \cite[Chapter 11]{Chen1} and \cite[Chapter 3]{Liu1} and references therein.

The paper is organized as follows. Section \ref{sec2} determines the exact form of $f$ when $f$ and $\Delta^n f$ share a function CM, while Section \ref{sec4} deals with the case where $f'$ and $f(z+1)$ share a function CM.  In Section \ref{sec3}, we apply the results from Sections \ref{sec2} and \ref{sec4} to establish two relationships:
(i) between an entire function \(f\) and \(\Delta^n f\), and
(ii) between \(f'\) and \(f(z+c)\),
provided that each pair shares a finite set.

\section{\texorpdfstring{$f$ and $\Delta^n f$ share an entire function}{f and Delta^n f share an entire function}}\label{sec2}

In 2014, Chen et al. \cite{Chen}  studied the difference analogues of the Br\"uck conjecture and got the following result.

\vspace{4pt}

\begin{thmx} \cite{Chen}
\label{tha}
	Let \( f \) be a finite order transcendental entire function such that
\(\lambda(f - a) < \rho(f)\), where \( a \) is an entire function with \(\rho(a) < 1\).
Let \( n \) be a positive integer.
If \(\Delta^n f\) and \( f \) share an entire function \( b \) (\( b \neq a \) and \(\rho(b) < 1\)) \text{CM},
where \(\Delta^n f \neq 0\), then
\[
f = a + c e^{c_1 z},
\]
where \( c, c_1 \) are two non-zero constants.
	
\end{thmx}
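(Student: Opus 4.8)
The plan is to convert the hypotheses into a Hadamard-type factorisation of $f-a$, turn the CM sharing into an exponential relation, and then pin down the factorisation by growth, indicator and Borel-type estimates.

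\emph{Setup and the exponent $e^{\gamma}$.} Since $f$ has finite order and $\lambda(f-a)<\rho(f)$, the Hadamard factorisation theorem gives $f-a=g\,e^{h}$, where $h$ is a polynomial of degree $d:=\rho(f-a)$ and $g$ is the canonical product over the zeros of $f-a$, so $\rho(g)=\lambda(f-a)<d$. We may assume $d\ge1$ (the case $d=0$, i.e. $\rho(f)<1$, is disposed of separately), so that $\rho(a),\rho(b)<1\le d$ and $a,b$ are small functions of $f$. Because $\Delta^{n}f$ and $f$ are entire of finite order and share $b$ CM, the quotient $(\Delta^{n}f-b)/(f-b)$ is entire, zero-free and of finite order, hence equals $e^{\gamma}$ for a polynomial $\gamma$; thus $\Delta^{n}f-b=(f-b)e^{\gamma}$. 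The difference analogue of the lemma on logarithmic derivatives (valid since $\rho(f)<\infty$) gives $T(r,\Delta^{n}f)\le T(r,f)+S(r,f)$, and comparing Phragm\'en--Lindel\"of indicators in the displayed relation — the indicator of $\Delta^{n}f$ is that of $e^{h}$, which leaves no room for the term $\operatorname{Re}(\gamma_{d}e^{id\theta})$ that $\deg\gamma=d$ would introduce — forces $\deg\gamma\le d-1$.

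\emph{Reduction to $d=1$.} Write $e^{h(z+j)}=e^{h(z)}e^{P_{j}(z)}$ with $P_{j}(z):=h(z+j)-h(z)$, so $\deg P_{j}\le d-1$ and $P_{0}\equiv0$, and put $\Phi:=\sum_{j=0}^{n}(-1)^{n-j}\binom{n}{j}g(z+j)e^{P_{j}}$. Substituting $f=a+g\,e^{h}$ into $\Delta^{n}f-b=(f-b)e^{\gamma}$ yields $e^{h}\bigl(\Phi-g\,e^{\gamma}\bigr)=(a-b)e^{\gamma}+b-\Delta^{n}a$. Here the right-hand side and the factor $\Phi-g\,e^{\gamma}$ both have order $<d$ (each is $\le\max(\rho(g),d-1)$, or $<1$ when $d=1$, since $\rho(g),\rho(a),\rho(b),\deg\gamma$ are all $<d$), whereas $e^{h}$ has order exactly $d$; hence $\Phi\equiv g\,e^{\gamma}$, i.e. $\Delta^{n}(f-a)=(f-a)e^{\gamma}$, and the remaining identity reads $\Delta^{n}a-b=(a-b)e^{\gamma}$. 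As the right side of the last equation has order $<1$, $e^{\gamma}$ must be a non-zero constant $k$, and dividing $\Delta^{n}(g e^{h})=k\,g e^{h}$ by $g e^{h}$ leaves $\sum_{j=0}^{n}(-1)^{n-j}\binom{n}{j}\frac{g(z+j)}{g(z)}e^{P_{j}(z)}=k$. If $d\ge2$, the $P_{1},\dots,P_{n}$ have pairwise distinct leading coefficients $d\,a_{d}\,j$; applying the difference analogue of Borel's lemma (directly when $\lambda(f-a)<d-1$) or examining the identity along a ray on which $e^{P_{n}}$ dominates the other exponentials forces $g\equiv0$, a contradiction. Hence $d=1$.

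\emph{Conclusion.} With $d=1$, $h(z)=c_{1}z+\beta$ with $c_{1}\ne0$; absorbing $e^{\beta}$ into $g$ gives $f=a+g\,e^{c_{1}z}$ with $\rho(g)<1$, and the identity above becomes $(e^{c_{1}}S-I)^{n}g=k\,g$, where $Sg(z)=g(z+1)$. This is a constant-coefficient linear difference equation, so its entire solutions of order $<1$ are polynomials; writing $e^{c_{1}}S-I=(e^{c_{1}}-1)I+e^{c_{1}}(S-I)$ and comparing the two highest-degree coefficients in $(e^{c_{1}}S-I)^{n}g=k\,g$ forces first $k=(e^{c_{1}}-1)^{n}$ (so $e^{c_{1}}\ne1$, since $k\ne0$) and then $n\,e^{c_{1}}(e^{c_{1}}-1)^{n-1}(\deg g)\,g_{\deg g}=0$, i.e. $\deg g=0$. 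Thus $g\equiv c$ for a constant $c\ne0$ (non-zero as $f\not\equiv a$), and $f=a+c\,e^{c_{1}z}$, as claimed. The principal obstacle is the step $d\ge2\Rightarrow$ contradiction: the exponential-sum estimate along the dominant ray has to be made uniform in the coefficients $g(z+j)/g(z)$, which is delicate precisely when $\lambda(f-a)$ is close to $\rho(f)$; everything else is routine bookkeeping.
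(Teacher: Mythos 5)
The paper never proves Theorem~A: it is quoted from Chen--Chen purely as motivation, so the only internal proof to measure you against is that of Theorem 2.1, which generalizes it. Against that benchmark your route is structurally the same one the paper (and this whole literature) uses: Hadamard factorization $f-a=g\,e^{h}$, the CM hypothesis giving $\Delta^{n}f-b=(f-b)e^{\gamma}$ with $\gamma$ a polynomial, splitting the identity into the dominant part $\Delta^{n}(f-a)=(f-a)e^{\gamma}$ and the small part $\Delta^{n}a-b=(a-b)e^{\gamma}$ (which, precisely because $b\not\equiv a$, forces $e^{\gamma}$ to be a non-zero constant $k$), and finally the coefficient comparison in $(e^{c_{1}}S-I)^{n}g=kg$; your closing computation is the paper's \eqref{2.9}--\eqref{2.10} verbatim. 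Where you genuinely diverge is in excluding $\deg h\ge 2$: the paper plays the constancy of $e^{Q}$ off against Lu et al.'s Corollary~2.2 (which gives $\deg Q=\deg h-1\ge 1$), whereas you attack the sum $\sum_{j}(-1)^{n-j}\binom{n}{j}\frac{g(z+j)}{g(z)}e^{P_{j}}=k$ along a ray where $e^{P_{n}}$ dominates. Your worry that this is ``delicate when $\lambda(f-a)$ is close to $\rho(f)$'' is unfounded: the finite-order Chiang--Feng difference-quotient estimate gives $\log|g(z+j)/g(z)|=O(r^{\rho(g)-1+\epsilon})$ off a set of finite logarithmic measure, and since $\rho(g)<d$ one may take $\epsilon$ so small that $\rho(g)-1+\epsilon<d-1$; the quotients are then $e^{o(r^{d-1})}$, the leading coefficients $a_{d}dj$ of the $P_{j}$ are positive multiples of one another, and the $j=n$ term wins on any ray with $\mathrm{Re}(a_{d}d\,e^{i(d-1)\theta})>0$. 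So that step closes, and your route trades the citation of Lu's Corollary~2.2 for a self-contained ray estimate.

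Two places do need more than you wrote. First, $\deg\gamma\le d-1$ is only gestured at, and as phrased it is circular: you appeal to ``the indicator of $\Delta^{n}f$ is that of $e^{h}$'' before knowing $\Phi\not\equiv 0$, which is exactly what the step is meant to help establish. The indicator argument can be repaired (on the arcs where $\mathrm{Re}(a_{d}e^{id\theta})>0$ one gets $\log|e^{\gamma}|=o(r^{d})$, so the degree-$d$ indicator of $e^{\gamma}$ vanishes on a set of positive angular measure and hence identically), or you can simply import Lu's Corollary~2.2 as the paper does. Second, ``the case $d=0$, i.e.\ $\rho(f)<1$, is disposed of separately'' is not a disposal: the stated hypotheses do not forbid $\rho(a)=\rho(f)<1$ with $f-a$ a polynomial, so you must actually show the sharing hypothesis is then unsatisfiable (e.g.\ by comparing $N\bigl(r,\tfrac{1}{f-b}\bigr)$ with $N\bigl(r,\tfrac{1}{\Delta^{n}f-b}\bigr)$, in the spirit of the paper's Remark~\ref{r1}). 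Neither point is fatal, but both must be written out before the argument is complete.
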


Later,  Li et al. \cite{Li} considered the case $a = b$ in Theorem  \ref{tha} and obtained
the following theorem.

\vspace{4pt}

\begin{thmx}\cite{Li}
\label{thb}
	Let \( f \) be a finite order transcendental entire function such that
\(\lambda(f - a) < \rho(f)\), where \( a \neq 0 \) is an entire function with \(\rho(a) < 1\).
Then, we have \( f - a \) and \(\triangle_\eta^n f - a \) share \(0\) \text{CM} if and only if
$f = a + B \left[ \triangle_\eta^n a - a \right] e^{Az}$
and
$\triangle_\eta^{2n} a - \triangle_\eta^n a = 0,$
where \( A, B \) are non-zero constants with \( e^{A\eta} = 1 \).	
\end{thmx}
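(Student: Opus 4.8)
\textit{Plan.} The plan is to prove the two implications separately: sufficiency is a direct computation, and necessity is the substantive direction. For sufficiency, suppose $f=a+B[\triangle_\eta^n a-a]e^{Az}$ with $B,A$ non-zero, $e^{A\eta}=1$ and $\triangle_\eta^{2n}a-\triangle_\eta^n a=0$. Since $e^{A\eta}=1$, induction on $n$ gives $\triangle_\eta^n\!\bigl(\varphi(z)e^{Az}\bigr)=e^{Az}\triangle_\eta^n\varphi(z)$ for every entire $\varphi$; taking $\varphi=B[\triangle_\eta^n a-a]$ and using $\triangle_\eta^n(\triangle_\eta^n a-a)=\triangle_\eta^{2n}a-\triangle_\eta^n a=0$ gives $\triangle_\eta^n f=\triangle_\eta^n a$, hence $\triangle_\eta^n f-a=\triangle_\eta^n a-a=\tfrac1B e^{-Az}(f-a)$. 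As $\tfrac1B e^{-Az}$ is entire and zero-free, $f-a$ and $\triangle_\eta^n f-a$ share $0$ CM (note $\triangle_\eta^n a-a\not\equiv0$, otherwise $f\equiv a$ would not be transcendental).

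\emph{Necessity, setup.} Assume $f-a$ and $\triangle_\eta^n f-a$ share $0$ CM. By the Hadamard factorization of the finite-order entire function $f-a$, write $f-a=ue^{h}$ with $h$ a polynomial and $u$ a canonical product with $\rho(u)=\lambda(u)=\lambda(f-a)$. Since $\rho(a)<1$, either $d:=\deg h\ge1$ and then $\rho(f)=\max\{d,\rho(u)\}=d>\rho(u)$ (using $\lambda(f-a)<\rho(f)$) — the \emph{main case} — or $\deg h=0$ and $\rho(f)=\rho(a)<1$ — the \emph{degenerate case}, treated at the end. In the main case $\triangle_\eta^n f-a$ is entire, $\not\equiv0$, and shares $0$ CM with $f-a$, so $\triangle_\eta^n f-a=e^{\beta}(f-a)$ for an entire $\beta$; since $T(r,\triangle_\eta^n f)=O(T(r,f))$, $e^{\beta}$ has finite order, so $\beta$ is a polynomial with $\deg\beta\le\rho(f)=d$. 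Writing $\triangle_\eta^n f=\triangle_\eta^n a+\triangle_\eta^n(ue^{h})$ and $\triangle_\eta^n(ue^{h})=e^{h}G$, where $G:=\sum_{k=0}^{n}(-1)^{n-k}\binom nk u(z+k\eta)\,e^{h(z+k\eta)-h(z)}$, the sharing relation rearranges to the key identity
\[
G=e^{\beta}u+b\,e^{-h},\qquad b:=a-\triangle_\eta^n a.\qquad(\ast)
\]
Here $\rho(G)\le\max\{\rho(u),d-1\}<d$, and $b\not\equiv0$ since $\triangle_\eta^n a=a$ is impossible for a nonzero $a$ of order $<1$; comparing orders in $(\ast)$ (the term $b\,e^{-h}$ has order $d$, forcing $e^{\beta}u$ to have order $d$) yields $\deg\beta=d$.

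\emph{The crux: $d=1$.} The decisive idea is to apply the classical Borel lemma on exponential sums to $(\ast)$ \emph{without} expanding $G$, so that the exponents occurring have pairwise differences of degree $d$ and the required coefficient bound (each coefficient of order $<d$) holds automatically. Put $\mu:=\beta+h$, so $\deg\mu\le d$. If $\deg\mu=d$, then $(\ast)$ reads $ue^{\beta}+be^{-h}+(-G)e^{0}\equiv0$, where the three exponents $\beta,-h,0$ pairwise differ by the degree-$d$ polynomials $\mu,\beta,-h$ and $\rho(u),\rho(b),\rho(G)<d$; Borel's lemma forces $u\equiv b\equiv G\equiv0$, contradicting $u\not\equiv0$. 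If $\deg\mu\le d-1$, then $e^{\beta}u+be^{-h}=e^{-h}(e^{\mu}u+b)$, so $(\ast)$ reads $(e^{\mu}u+b)e^{-h}+(-G)e^{0}\equiv0$ with exponents $-h,0$ differing by degree $d$ and coefficients of order $<d$; Borel's lemma gives $e^{\mu}u+b\equiv0$ and $G\equiv0$. Then $b=-e^{\mu}u$, and since $u$ is a canonical product, $\rho(e^{\mu}u)=\max\{\deg\mu,\rho(u)\}$; if $\deg\mu\ge1$ this gives $\rho(b)\ge1$, contradicting $\rho(b)\le\rho(a)<1$. Hence $\mu=\beta+h$ is constant, $G\equiv0$, and $b=-e^{\mu}u$, so $\rho(u)=\rho(b)<1$. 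Finally, if $d\ge2$ then $G\equiv0$ reads $\sum_{k=0}^{n}(-1)^{n-k}\binom nk u(z+k\eta)\,e^{h(z+k\eta)-h(z)}\equiv0$, whose exponents have degree $d-1$ with distinct leading terms while the coefficients have order $\rho(u)<1\le d-1$; Borel's lemma forces $u\equiv0$, a contradiction. Therefore $d=1$.

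\emph{Conclusion and the degenerate case.} With $d=1$ write $h(z)=Az+h_{0}$ ($A\ne0$); then $h(z+k\eta)-h(z)=Ak\eta$, so, writing $\tau$ for the shift $z\mapsto z+\eta$ and $w:=e^{A\eta}$, we have $G=\sum_{k=0}^{n}(-1)^{n-k}\binom nk w^{k}u(z+k\eta)=(w\tau-I)^{n}u$. Since $G\equiv0$, $\rho(u)<1$ and $u\not\equiv0$, taking the largest $j$ with $(w\tau-I)^{j}u\not\equiv0$ produces a nonzero element of $\ker(w\tau-I)$ of order $<1$; but every nonzero solution of $w\,v(z+\eta)=v(z)$ equals $e^{-Az}$ times an $\eta$-periodic entire function and hence has order $\ge1$ when $w\ne1$, so $w=e^{A\eta}=1$ and $G=\triangle_\eta^{n}u\equiv0$. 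Then $b=-e^{\mu}u$ gives $u=e^{-\mu}(\triangle_\eta^{n}a-a)$, whence $f=a+B[\triangle_\eta^{n}a-a]e^{Az}$ with $B:=e^{h_{0}-\mu}\ne0$, while $\triangle_\eta^{n}u\equiv0$ becomes $\triangle_\eta^{2n}a-\triangle_\eta^{n}a\equiv0$, as required. In the degenerate case $\rho(f)<1$ one has $\deg\beta=0$, so $\triangle_\eta^{n}f-a=c(f-a)$ for a constant $c\ne0$, i.e.\ $\triangle_\eta^{n}(f-a)-c(f-a)=a-\triangle_\eta^{n}a$; the left side has order $\le\rho(f-a)<\rho(f)=\rho(a)$, whereas for transcendental $a$ of order $<1$ the right side has order $\rho(a)$ (iterating $\triangle_\eta^{n}a=a+\psi$ with $\rho(\psi)<\rho(a)$ along $z_{0}+m\eta$ forces geometric growth of $a$, hence $\rho(a)\ge1$), a contradiction, so this case does not occur. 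The main obstacle is the elimination of $d\ge2$: applying Borel's lemma directly to the expanded form of $(\ast)$ would require exponent differences of degree $d-1$ and therefore the unavailable bound $\rho(u)<d-1$, and the point is to keep $G$ as the single coefficient of $e^{0}$ in $(\ast)$, expanding it only after one has already learned that $\rho(u)<1$.
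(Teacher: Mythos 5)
First, note that the paper does not prove this statement: Theorem~\ref{thb} is quoted from \cite{Li} as background, so there is no in-paper proof to compare against. Your argument can only be measured against the toolkit the paper uses for its own closely related results (Lemma~\ref{lm5}, Theorems~\ref{th1.1} and~\ref{th2.1}), and on that score your architecture is the standard one and is essentially sound: Hadamard factorization $f-a=ue^{h}$, the CM relation $\Delta_\eta^n f-a=e^{\beta}(f-a)$, the identity $G=e^{\beta}u+be^{-h}$, and Borel-type eliminations (the paper's Lemma~\ref{lm1}) to force $\beta+h$ constant, $G\equiv0$, $b=-ue^{\mu}$, and then $\deg h=1$. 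Your observation that $G$ must be kept unexpanded until $\rho(u)<1$ is known is exactly the right point, and the sufficiency direction is a correct computation.

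There are, however, two places where the written justification would not survive scrutiny, both of the same type: claims that certain difference equations have no nonzero solutions of order $<1$. (a) In the degenerate case you assert that $a-\Delta_\eta^{n}a=\psi$ with $\rho(\psi)<\rho(a)<1$ is impossible because ``iterating along $z_0+m\eta$ forces geometric growth.'' For $n\ge2$ this iteration argument fails as stated: the recurrence $(\tau-I)^n a-a=-\psi$ along an arithmetic progression has characteristic roots $1+\zeta$ with $\zeta^{n}=1$, whose moduli range over $[0,2]$, so a single trajectory need not grow geometrically. The same unproved assertion underlies your claim that $b=a-\Delta_\eta^n a\not\equiv0$. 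The correct and short fix is the Chiang--Feng estimate (the paper's Lemma~\ref{lm3} with $j=0$, $k=n$): for $\rho(a)<1$ one has $|\Delta_\eta^{n}a/a|\le|z|^{n(\rho(a)-1+\epsilon)}\to0$ off a set of finite logarithmic measure, which gives $|\psi|\ge\tfrac12|a|$ there and hence $\rho(\psi)\ge\rho(a)$, and likewise rules out $\Delta_\eta^n a\equiv a$; this is precisely how the paper argues in Subcase~2.1 of Lemma~\ref{lm5}. (b) In the step $w=e^{A\eta}=1$, you assert that a nonzero solution of $wv(z+\eta)=v(z)$ has order $\ge1$ when $w\ne1$ ``because it is $e^{-Az}$ times an $\eta$-periodic function.'' That inference is not immediate (a product of an order-one factor with a periodic function can a priori have small order); you need either the observation that the zero set of such a $v$ is $\eta$-periodic (forcing $\lambda(v)\ge1$), while a zero-free $v=e^{g}$ of order $<1$ is constant and then $w=1$, or again Lemma~\ref{lm3} applied to $\Delta v/v\equiv 1/w-1$. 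With these two repairs the proof is complete; as written, both steps are gaps.
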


\vspace{4pt}

  L\"u et al. \cite{Lu}  given a joint theorem involve of both cases $a = b$ and $a \not= b$ , and the condition $\rho(b)<1$ in Theorem \ref{tha} is  weakened.

\vspace{4pt}

\begin{thmx} \cite{Lu}
\label{thc}
	Let \( f \) be a finite order transcendental entire function and let \( n \) be a positive integer. 
Assume that $a$ and $b$ are both entire functions such that   $ \lambda(f - a) < \rho(f)$, $\rho(a)<1$, $\rho(a) \neq \rho(f)$ and  $\rho(b) < \max\{1, \rho(f)\}$. 
If \(\triangle^n f-b\) and \( f -b\) share \(0 \) \text{CM}, where \( b \) is an entire function with \(\rho(b) < \max\{1, \rho(f)\}\), then
\[
f = a + c e^{\gamma z},
\]
where \( c, \gamma \) are two non-zero constants.
In particular, if \( a = b \), then \( a \) reduces to a constant.
	
\end{thmx}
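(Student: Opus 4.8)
The plan is to follow the Hadamard–factorization strategy underlying the proofs of Theorems~\ref{tha}--\ref{thb}, but to arrange the growth estimates so that only the weakened hypothesis $\rho(b)<\max\{1,\rho(f)\}$ is used, and then to settle the extra case $a=b$ by a difference–equation argument. First I would record the structural normal form: since $f$ is transcendental entire of finite order $\rho(f)=:k$ with $\lambda(f-a)<k$, Hadamard's theorem gives $f-a=Qe^{P}$ with $P$ a polynomial of degree $k$ and $Q$ entire of order $\rho(Q)=\lambda(f-a)<k$ (in particular $k\ge1$). Since $\Delta^{n}f-b$ and $f-b$ are entire and share $0$ CM, their quotient is entire and zero-free, so $\Delta^{n}f-b=e^{H}(f-b)$ for some entire $H$; comparing orders (using $\rho(\Delta^{n}f)\le\rho(f)=k$ for finite order $f$, and $\rho(b)<\max\{1,k\}=k$, hence $\rho(f-b)=k$) forces $e^{H}$ to have order $\le k$, so $H$ is a polynomial with $\deg H\le k$.

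Substituting the normal form, and writing $P(z+jc)=P(z)+P_{j}(z)$ with $\deg P_{j}\le k-1$, one obtains $\Delta^{n}(Qe^{P})=e^{P}R$ with $R=\sum_{j=0}^{n}(-1)^{n-j}\binom{n}{j}Q(z+jc)e^{P_{j}(z)}$ entire of order $\le\max\{k-1,\rho(Q)\}<k$, so the sharing identity becomes
\[
e^{P}\bigl(R-e^{H}Q\bigr)=e^{H}(a-b)+b-\Delta^{n}a .
\]
The heart of the proof is to show $k=1$. The right–hand side has order $\le\max\{\deg H,\rho(a),\rho(b)\}$; if $\deg H<k$ this is $<k$, which (since $e^{P}$ has order exactly $k$) forces $R\equiv e^{H}Q$ and $e^{H}(a-b)\equiv\Delta^{n}a-b$, while if $\deg H=k$ one expands the identity into a finite sum of terms (coefficient of order $<k$) $\times\,e^{(\text{polynomial})}$ equal to a function of order $<k$. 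In either case, when $k\ge2$ the shift–polynomials $P_{j}=P(z+jc)-P(z)$ for $1\le j\le n$ have degree exactly $k-1\ge1$ with pairwise–distinct leading coefficients (proportional to $j$), so the corresponding exponentials are independent in the sense of a Borel–type lemma; tracking the cases, the identity can persist only through a degeneration that contradicts one of $\Delta^{n}f\not\equiv0$, the transcendence of $f$, or $\lambda(f-a)<\rho(f)$. Hence $k=1$ and, absorbing the constant term of $P$, $f=a+Q_{1}e^{\gamma z}$ with $\gamma\ne0$ and $\rho(Q_{1})<1$.

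It remains to upgrade $Q_{1}$ to a constant. With $k=1$ one has $\Delta^{n}(Q_{1}e^{\gamma z})=e^{\gamma z}\widetilde Q_{1}$ with $\widetilde Q_{1}=\sum_{j}(-1)^{n-j}\binom{n}{j}e^{\gamma jc}Q_{1}(z+jc)$ of order $<1$, and $\deg H\le1$; analysing $\deg H=0$ and $\deg H=1$ as above reduces matters to $\widetilde Q_{1}=e^{H_{0}}Q_{1}$ for a constant $H_{0}$ together with $e^{H_{0}}(a-b)\equiv\Delta^{n}a-b$. The first relation is a constant–coefficient linear difference equation whose characteristic polynomial $(e^{\gamma c}w-1)^{n}-e^{H_{0}}$ has $n$ distinct roots; since an entire function of order $<1$ lying in the kernel of a first–order factor $E-w$ (with shift operator $E$) must be constant, and must vanish when $w\ne1$, $Q_{1}$ is a nonzero constant $c$, so $f=a+ce^{\gamma z}$. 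In the case $a=b$, substituting $f=a+ce^{\gamma z}$ into $\Delta^{n}f-a=e^{H}(f-a)$ gives $\Delta^{n}a-a=ce^{\gamma z}\bigl(e^{H}-(e^{\gamma c}-1)^{n}\bigr)$; comparing orders forces $e^{H}\equiv(e^{\gamma c}-1)^{n}$ and $\Delta^{n}a\equiv a$, and since the characteristic equation $(w-1)^{n}=1$ has no root equal to $1$, the only entire solution of order $<1$ is a constant, i.e.\ $a$ reduces to a constant.

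I expect the main obstacle to be the step $k=1$: making the Borel–type (equivalently, second–main–theorem) bookkeeping airtight when $\deg H=k\ge2$ and when the exponent polynomials $P+H$ or $P-H$ degenerate in degree, so that the order-$<k$ coefficients are genuinely negligible against every surviving exponential difference. A secondary subtlety is justifying, in the last step, that entire solutions of order $<1$ of the relevant constant–coefficient difference equations are the expected constants rather than exotic solutions of order $\ge1$.
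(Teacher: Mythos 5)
Your overall architecture --- Hadamard factorization $f-a=Qe^{P}$ with $\deg P=\rho(f)=k$, the CM quotient $\Delta^{n}f-b=e^{H}(f-b)$ with $H$ a polynomial, reduction to $k=1$, then rigidity of the order-$<1$ coefficient --- is the same as in \cite{Lu} and in this paper's generalization (Theorem~\ref{th1.1} via Lemma~\ref{lm5}); in particular your closing ``characteristic roots of $(e^{\gamma c}w-1)^{n}=e^{H_{0}}$'' step is a repackaging of the coefficient comparison \eqref{2.8}--\eqref{2.10}, and both versions rest on the cited fact that entire solutions of order $<1$ of such constant-coefficient difference equations are polynomials. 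Note also that the statement needs the hypothesis $\Delta^{n}f\not\equiv0$ (Example~\ref{e1} satisfies every stated hypothesis but not the conclusion); you use it only implicitly.

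The genuine gap is in the step you yourself flag as the main obstacle, namely ruling out $k\ge2$. After forcing $R\equiv e^{H}Q$ you propose to expand $\sum_{j}(-1)^{n-j}\binom{n}{j}Q(z+jc)\,e^{P_{j}}=e^{H}Q$ and apply a Borel-type lemma, using that the $P_{j}$ have degree exactly $k-1$ with distinct leading coefficients. But condition (iii) of Lemma~\ref{lm1} requires $T(r,Q(z+jc))=o\bigl(T(r,e^{P_{j}-P_{i}})\bigr)=o(r^{k-1})$, whereas the hypothesis only gives $\rho(Q)=\lambda(f-a)<k$; whenever $\lambda(f-a)\in[k-1,k)$ the coefficients swamp the exponentials $e^{P_{j}-P_{i}}$ and the lemma is inapplicable, so the ``degeneration tracking'' cannot be completed this way. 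The paper never separates that sum: it keeps $w=e^{-h}\,\Delta^{n}(Pe^{h})$ intact, shows $w$ is small relative to $e^{h}$ by the difference logarithmic derivative lemma (this is where hyper-order $<1$, here finite order, enters), applies the second main theorem to $e^{h}$ with the two small targets $-\frac{a-b}{P}$ and $-\frac{\Delta^{n}a-b}{w}$ to conclude that the common zeros force $\frac{a-b}{P}=\frac{\Delta^{n}a-b}{w}$ and hence $e^{H}=\frac{\Delta^{n}a-b}{a-b}$ is itself of order $<1$, and then plays this against the lower bound $\deg H=\deg P-1\ge1$ supplied by \cite[Corollary 2.2]{Lu}. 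That a priori degree identity, and the second-main-theorem mechanism converting the CM hypothesis into ``$e^{H}$ is a ratio of small functions,'' are the two ideas missing from your outline; without them the case $\deg H=k$ with its degenerations (e.g.\ $H=P+P_{j_{0}}+\mathrm{const}$ or $H=-P+\cdots$) is not actually closed.
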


 The following example is not covered by Theorem \ref{thc} when $\Delta^n f \equiv 0$.
\begin{exa}
\label{e1}
$f=ze^{2\pi iz}+z$, $a=b=z$, then $\Delta^2f=0$, $\Delta^2f-z$ and $f-z=ze^{2\pi iz}$ share 0 \text{CM}.
\end{exa}

Note that the function $f $is finite order in the Theorems A-C. It is known that the difference analogues
to Nevanlinna theory given by Halburd et al. \cite{Halburd} and  Chiang et al. \cite{Chiang} has been improved by Halburd et al. \cite{Halburd1} from the finite order of meromorphic functions to  hyper order
strictly less than 1. So, one may ask the question:

\vspace{4pt}

\begin{Que}
\label{q1}
Whether the theorem \ref{thc} still holds if the entire function $f $ is of  hyper order strictly less than 1 and the conditions on 
$a,b $ are weakened?
\end{Que}

    \vspace{6pt}
We give an answer to the Question \ref{q1} by proving the following
theorem.
\vspace{6pt}

\begin{thm}
\label{th1.1}
	Let \( f \) be a  transcendental entire function with $\rho_2(f)<1$, and let \( n \) be a positive integer. 
Assume that $a$ and $b$ are both  small entire functions such that $\lambda(f - a) < \rho(f)$ and $\rho(a) < \rho(f)$.
If \(\Delta^n f-b\) and \( f -b\) share \(0 \) \text{CM}, then
one of the following assertions holds.
	\begin{enumerate}
		\item[\rm{(i)}]If $\Delta^nf\not\equiv0$,   then
$f(z) = a(z) + pe^{\gamma z},$
 and $\Delta^n f-b=q(f-b)$, where \( p, \gamma, q \) are  non-zero constants and $(e^{\gamma}-1)^n=q.$ In particular, if \( a = b \), then \( a=0 \).
		\item[\rm{(ii)}] If $\Delta^nf\equiv0$ and $\rho(a)<1$, then $f = a(z) + Aa(z) e^{\gamma z},$ $a=b$, $e^{\gamma}=1$, $\Delta^na\equiv0$ and  $\Delta^n f-b=\frac{-1}{Ae^{\gamma z}}(f-b)$, where $a,b$ are polynomials and $A$, $ \gamma$ are non-zero constants.
	\end{enumerate}
\end{thm}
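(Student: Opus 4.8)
The plan is to combine a Hadamard-type factorization of $f-a$ with the difference analogue of the lemma on the logarithmic derivative---available since $\rho_{2}(f)<1$---to turn the sharing hypothesis into a functional equation, and then to analyse that equation by comparing orders and by a Borel/Steinmetz argument for exponential sums with small-function coefficients. First, as in Remark~\ref{r1}, the hypotheses force $\rho\ge 1$; writing $F:=f-a$ we get $\rho(F)=\rho$, $\lambda(F)<\rho$, $\rho_{2}(F)=\rho_{2}(f)<1$, so $F=Pe^{v}$ where $P$ is the canonical product over the zeros of $F$ (hence $\rho(P)=\lambda(F)<\rho$ and $T(r,P)=S(r,f)$) and $v$ is entire with $\rho(e^{v})=\rho$, $\rho_{2}(e^{v})<1$. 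Since $f-b$ and $\Delta^{n}f-b$ are entire and share $0$ CM, $\tfrac{\Delta^{n}f-b}{f-b}=e^{h}$ for some entire $h$, and the difference logarithmic derivative lemma for hyper-order $<1$ gives $m\bigl(r,\tfrac{f(z+j)}{f(z)}\bigr)=S(r,f)$, so that $T(r,e^{h})=O(T(r,f))$, $\rho_{2}(e^{h})<1$, and each $e^{v(z+j)-v(z)}$ is a small function of $f$. Expanding $\Delta^{n}f=\Delta^{n}a+Ae^{v}$ with $A:=\sum_{j=0}^{n}(-1)^{n-j}\binom{n}{j}P(z+j)e^{v(z+j)-v(z)}$ (a small function of $f$), the sharing relation becomes
\[
\bigl(A-e^{h}P\bigr)e^{v}=e^{h}(a-b)+b-\Delta^{n}a .
\]
We split into $\Delta^{n}f\equiv 0$ and $\Delta^{n}f\not\equiv 0$.

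Suppose $\Delta^{n}f\equiv 0$. Then the sharing relation reads $-b=e^{h}(f-b)$, so $f=b(1-e^{-h})$; since $b$ is small this forces $\rho(e^{-h})=\rho(f)$, and if we had $a\not\equiv b$ the zeros of $f-a=(b-a)-be^{-h}$ would (essentially) be those of $e^{-h}-(1-a/b)$, whose counting function is $(1+o(1))T(r,e^{-h})$ because $e^{-h}$ omits $0$, contradicting $\lambda(f-a)<\rho(f)$; hence $a\equiv b$ and $f-a=-ae^{-h}$. From $\Delta^{n}(ae^{-h})=\Delta^{n}a$ and the smallness of each $e^{h(z)-h(z+j)}$ (difference logarithmic derivative lemma) one gets that $e^{h}\Delta^{n}a=\sum_{j}(-1)^{n-j}\binom{n}{j}a(z+j)e^{h(z)-h(z+j)}$ is a small function; since $T(r,e^{h})\asymp T(r,f)$, this forces $\Delta^{n}a\equiv 0$, so $a$ (hence $b$) is a polynomial of degree $\le n-1$. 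Consequently $g:=-(f-a)=ae^{-h}$ satisfies $\Delta^{n}g\equiv 0$, so $g=\sum_{i=0}^{n-1}z^{i}\pi_{i}(z)$ with each $\pi_{i}$ of period $1$; because $\rho_{2}(g)=\rho_{2}(f)<1$ the $\pi_{i}$ are finite exponential sums $\sum_{m}c_{i,m}e^{2\pi i m z}$, whence $\rho(f)=\rho(g)\le 1$, i.e.\ $\rho=1$. Then $e^{-h}=g/a$ is zero-free entire of order $\le 1$, so $h$ is a polynomial of degree $\le 1$; $h$ is not constant (else $f$ would be a polynomial), so $e^{-h}=Ae^{\gamma z}$ with $\gamma\neq 0$; and since $g$ involves only the frequencies $2\pi i m$ ($m\in\mathbb{Z}$) whereas $aAe^{\gamma z}$ involves only $\gamma$, the identity $aAe^{\gamma z}=g$ forces $\gamma\in 2\pi i\mathbb{Z}$, i.e.\ $e^{\gamma}=1$. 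Since $\Delta^{n}f-b=-b=\tfrac{1}{Ae^{\gamma z}}(f-b)$, this is assertion (ii) (after replacing $A$ by $-A$).

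Suppose $\Delta^{n}f\not\equiv 0$; the goal is to show $e^{h}$ is a non-zero constant, after which the conclusion follows quickly. If $a\not\equiv b$ and the right-hand side of the displayed equation does not vanish identically, then $A-e^{h}P\not\equiv 0$, and writing $e^{v}=\tfrac{e^{h}(a-b)+b-\Delta^{n}a}{A-e^{h}P}$ and clearing denominators gives an identity $-Pe^{h+v}+Ae^{v}-(a-b)e^{h}-(b-\Delta^{n}a)=0$ among four exponentials with small-function coefficients; a Borel/Steinmetz argument (separating according to whether $e^{h}$, $e^{h+v}$, $e^{v-h}$ are small) forces $P\equiv 0$, which is impossible. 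So the right-hand side vanishes: $e^{h}(a-b)=\Delta^{n}a-b$; then $b=\tfrac{e^{h}a-\Delta^{n}a}{e^{h}-1}$ is entire, every zero of $e^{h}-1$ is a zero of $a-\Delta^{n}a$, and since $\lambda(a-\Delta^{n}a)\le\rho(a)<1$ while the zero-counting function of $e^{h}-1$ is $(1+o(1))T(r,e^{h})$, we obtain $\rho(e^{h})<1$, so $h$ is constant. If instead $a\equiv b$, then $e^{h}=\tfrac{\Delta^{n}a-a+\Delta^{n}(f-a)}{f-a}$ with $\tfrac{\Delta^{n}(f-a)}{f-a}$ small, and a further Borel/Steinmetz identity ($Pe^{h+v}-Ae^{v}-(\Delta^{n}a-a)=0$) together with the growth fact that $\Delta^{n}a-a=\sigma(f-a)$ with $\sigma$ small forces $\sigma\equiv 0$ (compare $T(r,\sigma(f-a))=T(r,f)+S(r,f)$ against the order of $\Delta^{n}a-a$) yields $a\equiv 0$; then $\Delta^{n}f=e^{h}f$, and the resolved difference analogue of the Br\"uck conjecture for the value $0$ (or directly a Borel argument on $e^{h}=A/P$, whose exponents $v(z+j)-v(z)$ force $\deg v=1$) gives $h$ constant. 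In all cases write $e^{h}=q\neq 0$; the displayed equation then gives $A\equiv qP$, i.e.\ $\Delta^{n}(f-a)=q(f-a)$, a constant-coefficient linear difference equation whose entire hyper-order-$<1$ solutions are $f-a=\sum_{k}\pi_{k}(z)\lambda_{k}^{z}$, with $\lambda_{1},\dots,\lambda_{n}$ the simple roots of $(\lambda-1)^{n}=q$ (all $\neq 1$) and the $\pi_{k}$ period-$1$ of hyper-order $<1$, hence finite exponential sums. Thus $\rho(f-a)=1$, so $\rho=1$; and since $\lambda(f-a)<1$ whereas any finite exponential sum with two or more distinct frequencies has zero-counting function $\asymp r$, the sum reduces to $f-a=pe^{\gamma z}$ with $p,\gamma\neq 0$. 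Then $e^{\gamma}$ equals some $\lambda_{k}$, so $(e^{\gamma}-1)^{n}=q$, and $f=a+pe^{\gamma z}$, $\Delta^{n}f-b=q(f-b)$; in particular $a=b$ forces $a\equiv 0$. This is assertion (i).

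The step I expect to be the main obstacle is showing that $e^{h}$ is constant in the case $\Delta^{n}f\not\equiv 0$---above all, excluding a transcendental exponent $v$, equivalently $\rho(f)=\infty$, which is delicate because entire functions of infinite order with hyper-order $<1$ do exist, so one cannot appeal to Hadamard factorization into a polynomial times a canonical product. The approach above circumvents this by first forcing $e^{h}$ to be a constant, after which the constant-coefficient difference equation $\Delta^{n}(f-a)=q(f-a)$ makes $\rho(f)=1$ automatic; the remaining genuinely delicate sub-case is $a=b\equiv 0$, handled by the value-$0$ difference Br\"uck analysis. Throughout, the two recurring tools are the difference Nevanlinna estimates valid for $\rho_{2}<1$ (used to treat $e^{h}$, $e^{v(z+j)-v(z)}$ and $e^{h(z)-h(z+j)}$ as small functions) and the Borel/Steinmetz theorem for exponential sums with small-function coefficients.
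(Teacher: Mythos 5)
Your skeleton matches the paper's in its main components: the factorization $f-a=Pe^{v}$, the CM relation $\Delta^{n}f-b=e^{h}(f-b)$, the use of the difference logarithmic derivative lemma to make $e^{v(z+j)-v(z)}$ and the difference quotients small, the case split on $\Delta^{n}f\equiv0$ and on $a\equiv b$, and the key identity $e^{h}=(\Delta^{n}a-b)/(a-b)$ extracted by comparing zeros of $e^{v}+\frac{a-b}{P}$ with those of the numerator (this is exactly Subcase 2.2 of the paper's Lemma \ref{lm5}). Where you genuinely diverge is the order of attack: you first force the multiplier $e^{h}$ to be a constant $q$ and then try to read off the form of $f$ from the constant-coefficient equation $\Delta^{n}(f-a)=q(f-a)$, whereas the paper first proves (Lemma \ref{lm5}, via Lemma \ref{lm1} applied to $\sum_{j}\binom{n}{j}(-1)^{n-j}P(z+j)e^{v(z+j)}=(\cdots)$, which forces some $v(z+k)-v(z+i)$ to be a polynomial and hence, since a non-constant periodic function has order at least $1$ while $\rho(v)\le\rho_{2}(f)<1$, that $v$ is a polynomial) and only afterwards pins down $\deg v=1$ using the relation $\deg Q=\deg v-1$ from L\"u et al.\ together with the constancy of $e^{Q}$, finishing with the coefficient comparison in \eqref{2.8} to get $P$ constant.

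The reordering is legitimate in principle, but your route has a genuine gap precisely at the step you correctly identify as the main obstacle. You deduce $\rho(f-a)=1$ from the representation $f-a=\sum_{k}\pi_{k}(z)\lambda_{k}^{z}$ (and, in case (ii), from $g=\sum_{i}z^{i}\pi_{i}(z)$) by asserting that a period-$1$ entire function of hyper-order strictly less than $1$ is a finite exponential sum. That assertion is false: $\pi(z)=\sum_{m\ge0}e^{-m^{2}}e^{2\pi imz}$ is entire, $1$-periodic, of order $2$ and hyper-order $0$, and is not a finite exponential sum. (The statement is true for \emph{zero-free} periodic functions of hyper-order $<1$, but the individual $\pi_{k}$ in the representation need not be zero-free even though $f-a$ has few zeros.) Since this lemma carries the entire burden of excluding $\rho(f)>1$ and $\rho(f)=\infty$ --- which is the actual content of the theorem beyond the routine reductions --- the argument as written does not close. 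To repair it you would need to replace that step by something like the paper's mechanism: apply Lemma \ref{lm1} to $\sum_{j}\binom{n}{j}(-1)^{n-j}P(z+j)e^{v(z+j)}=qP(z)e^{v(z)}$ to conclude that some $v(z+k)-v(z+i)$ is a polynomial, hence $v$ is a polynomial of degree $\ge1$; then rule out $\deg v>1$ (the paper does this with $\deg Q=\deg v-1$ against $e^{Q}$ constant) and run the coefficient comparison of \eqref{2.8}--\eqref{2.10} to force $P$ constant. A secondary, smaller gap: in the sub-case $a\not\equiv b$ your derivation of ``$h$ constant'' from the zeros of $e^{h}-1$ lying among those of $a-\Delta^{n}a$ silently assumes $a-\Delta^{n}a\not\equiv0$ and $e^{h}\not\equiv1$; both degenerate alternatives do need to be dispatched (they lead back to $a\equiv b$ via Lemma \ref{lm3}), though that is a one-line fix.
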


\begin{rem}
\label{r1}

  Compared to Theorem~\ref{thc}, the condition on $a$ and $b$ in our theorem  differs in that we require $a$ and $b$ both to be  small functions  of $f$  such $\rho(a) < \rho(f)$ without   assuming $\rho(a) < 1$, $\rho(a) \neq \rho(f)$ and $\rho(b) < \max\{1, \rho(f)\}$. In fact, under the assumption that $a$ and $b$ are both entire functions satisfying $\lambda(f - a) < \rho(f)$, the conditions in Theorem~C that $\rho(a) \neq \rho(f)$, $\rho(a) < 1$ and $\rho(b) < \max\{1, \rho(f)\}$ imply that $a$ and $b$ are both small entire functions of $f$ and $\rho(a) < \rho(f)$. In the following, we present the proof.
  
   Assume that $a$ and $b$ are both entire functions such that $\lambda(f - a) < \rho(f)$, $\rho(a)<1$, $\rho(a) \neq \rho(f)$ and $\rho(b) < \max\{1, \rho(f)\}$. We claim $\rho(f) \ge 1$. Otherwise, assume $\rho(f) < 1$. Noting that $\rho(a) \neq \rho(f)$, we have $\rho(f - a) = \max\{\rho(a), \rho(f)\}$. It follows that $\lambda(f - a) < \rho(f)\le \rho(f - a)$, which implies $f - a$ has two Borel exceptional values, namely $0$ and $\infty$. Consequently, $f - a$ must be of regular growth with $\rho(f - a) \geq 1$, leading to a contradiction. Hence, $\rho(f) \ge 1$.
    
Now, since $\lambda(f - a) < \rho(f) = \rho(f - a)$, we deduce that $f - a$ is of regular growth with $\rho(f - a) = \mu(f - a) \geq 1$. Since $\mu(a) < 1$, we get $\mu(f) \geq 1$. Moreover, from $\rho(b) < \max\{1, \rho(f)\}$ and $\rho(a) < 1$, we obtain $\rho(b) < \rho(f) = \mu(f)$ and $\rho(a) < \mu(f)$. This confirms that $a$ and $b$ are small functions of $f$ and $\rho(a)<\rho(f)$.
\end{rem}

\vspace{6pt}

\begin{rem}
Example \ref{e1} illustrates that case (ii) of Theorem \ref{th1.1} may occur. The case (i) of Theorem \ref{th1.1} gives an affirmative answer to the Question \ref{q1}.
\end{rem}

\vspace{6pt}
The following example illustrates that case (i) of Theorem \ref{th1.1} may occur.
\vspace{6pt}

\begin{exa}\cite[Remark 6]{Lu}
Let \(a(z)=z\), \(b(z)=\frac{(e - 1)z-1}{e - 2}\) and \(f(z)=a(z)+e^{z}=z + e^{z}\). Then
$
\frac{\Delta f - b}{f - b}=e - 1.
$
 Obviously, \(\Delta f - b\) and \(f - b\) share \(0\) CM.
\end{exa}

The following example  is given to show that the condition $\rho(a)<\rho(f)$ is necessary.

\vspace{4pt}

\begin{exa}
    Consider \( f(z) = e^{z\ln2} (e^{2\pi i z} + e^{6k\pi iz}) \). Obviously, \( \Delta f(z) = f(z) \). Assume that \( b \) is a arbitrary entire functions of order less than 1. Then \( \Delta f(z) \) and \( f(z) \) share  \( b \) CM.  However, $e^{ln2z}e^{6k\pi iz}$ is a Borel exceptional function of $f$,  but the form of \( f \) does not satisfy the conclusion of Theorem \ref{th2.1}.
\end{exa}

\subsection{Lemmas}

Before proving Theorem \ref{th1.1}, we need some lemmas.
The following two lemmas are Borel type theorem, which can be found in \cite{yang2003}.

\vspace{6pt}

\begin{lem}\cite[Theorem 1.51]{yang2003}
	\label{lm1}
	Let $f_1,\,f_2,\ldots,f_n~(n\ge2)$ be meromorphic functions, $g_1,\,$
	$g_2,\ldots,g_n$ be entire functions satisfying
	the following conditions,
	\begin{itemize}
		\item [\rm{(i)}]$\sum\limits_{j=1}^n{{f_j(z)}}e^{g_j(z)}\equiv0$,
		\item[\rm{(ii)}]for $1\le{j}<{k}\le{n}$, $g_j-g_k$ is not constant,
		\item[\rm{(iii)}]for $1\le{j}\le{n},1\le{t}<{k}\le{n}$, $T(r,f_j)=o(e^{g_t-g_k})$, $r\rightarrow\infty,\,r\notin{E}$, where $E$ is the set of finite linear measure.
	\end{itemize}
	Then $f_j(z)\equiv0,\, j=1,\ldots,n$.
\end{lem}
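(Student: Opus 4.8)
The plan is to argue by induction on the number $n$ of terms, reducing an $n$-term exponential identity to an $(n-1)$-term identity of the same type by a single elimination step and then invoking the induction hypothesis. The base case $n=1$ is trivial, since $f_1e^{g_1}\equiv0$ forces $f_1\equiv0$ ($e^{g_1}$ being zero-free). It is instructive to spell out $n=2$, as it already exhibits the mechanism used throughout: if $f_1e^{g_1}+f_2e^{g_2}\equiv0$ with $f_2\not\equiv0$, then $f_1/f_2=-e^{g_2-g_1}$, and the first main theorem together with (iii) gives $T(r,e^{g_2-g_1})=T(r,f_1/f_2)+O(1)=o(T(r,e^{g_1-g_2}))=o(T(r,e^{g_2-g_1}))$, which is impossible because $g_2-g_1$ nonconstant makes $e^{g_2-g_1}$ transcendental; hence $f_2\equiv0$ and then $f_1\equiv0$.

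For the inductive step I assume the assertion for $n-1$ terms and take $\sum_{j=1}^n f_je^{g_j}\equiv0$. The case $f_n\equiv0$ drops a term and the induction hypothesis acts on the remaining identity, so I suppose toward a contradiction that $f_n\not\equiv0$. Dividing by $f_ne^{g_n}$ and differentiating produces
\[
\sum_{j=1}^{n-1}\Phi_j\,e^{g_j-g_n}\equiv 0,\qquad \Phi_j:=\Big(\tfrac{f_j}{f_n}\Big)'+\tfrac{f_j}{f_n}\,(g_j'-g_n'),
\]
an $(n-1)$-term identity whose exponents $g_j-g_n$ still satisfy (ii), since $(g_j-g_n)-(g_k-g_n)=g_j-g_k$ is nonconstant.

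The key step is to verify that the new coefficients $\Phi_j$ again obey (iii) relative to the unchanged dominant scales $T(r,e^{g_t-g_k})$, $1\le t<k\le n-1$. Writing $\Phi_j=\tfrac{f_j}{f_n}\big[\tfrac{(f_j/f_n)'}{f_j/f_n}+(g_j'-g_n')\big]$ and using $T(r,f_j/f_n)\le T(r,f_j)+T(r,f_n)+O(1)$, condition (iii) handles the rational factor, while the lemma on the logarithmic derivative bounds $m\big(r,(f_j/f_n)'/(f_j/f_n)\big)$ and, crucially, $m(r,g_j'-g_n')=m\big(r,(e^{g_j-g_n})'/e^{g_j-g_n}\big)=O\big(\log^+ T(r,e^{g_j-g_n})+\log r\big)$ outside a set of finite measure; since $g_j-g_n$ is entire this term contributes nothing to $N(r,\Phi_j)$ beyond the poles of $f_j/f_n$. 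The point I would stress is that this logarithmic-derivative contribution is only \emph{logarithmic} in the characteristic of $e^{g_j-g_n}$, hence negligible against every transcendental scale $T(r,e^{g_t-g_k})$; this gives $T(r,\Phi_j)=o(T(r,e^{g_t-g_k}))$, so (iii) persists.

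By the induction hypothesis $\Phi_j\equiv0$ for each $j$. As $\Phi_je^{g_j-g_n}=\big[(f_j/f_n)e^{g_j-g_n}\big]'$, integrating yields $f_je^{g_j}=c_jf_ne^{g_n}$ for constants $c_j$. If some $c_j\ne0$, then $e^{g_n-g_j}=(f_j/f_n)/c_j$, whence $T(r,e^{g_n-g_j})\le T(r,f_j)+T(r,f_n)+O(1)=o(T(r,e^{g_j-g_n}))=o(T(r,e^{g_n-g_j}))$, which is impossible; so every $c_j=0$ and $f_j\equiv0$ for $1\le j\le n-1$. Substituting back leaves $f_ne^{g_n}\equiv0$, forcing $f_n\equiv0$ and contradicting $f_n\not\equiv0$. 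Hence $f_n\equiv0$, and the induction hypothesis applied to $\sum_{j=1}^{n-1}f_je^{g_j}\equiv0$ completes the proof. I expect the growth bookkeeping of the inductive step to be the main obstacle: one must confirm that eliminating a term does not inflate the coefficients past the dominant exponential scales, which hinges on the logarithmic-derivative estimate being genuinely logarithmic (rather than merely $o(T(r,e^{g_j-g_n}))$, a bound tied to the eliminated pair that need not be small against the remaining pairs).
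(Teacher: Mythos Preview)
The paper does not supply a proof of this lemma; it is quoted from \cite[Theorem~1.51]{yang2003} and used as a black box, so there is nothing in the paper to compare your argument against.

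Your inductive elimination has a real gap at the point where you assert that condition~(iii) persists for the reduced $(n-1)$-term system. You claim that $m(r,g_j'-g_n')=O\big(\log^+ T(r,e^{g_j-g_n})+\log r\big)$ is negligible against every transcendental scale $T(r,e^{g_t-g_k})$ for $1\le t<k\le n-1$, but nothing in the hypotheses forces $\log T(r,e^{g_j-g_n})$ --- a quantity tied to the \emph{eliminated} index $n$ --- to be small against the surviving scales. For a concrete failure of this step, take $n=4$ with $g_1=z$, $g_2=0$, $g_3=e^z$, $g_4=2e^z$, and suppose the $f_j$ are constants (so the original~(iii) is satisfied; the identity~(i) would then force them all to vanish, but the issue is whether your \emph{argument} goes through under the contradiction hypothesis $f_4\neq0$). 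Eliminating index~$4$, the new coefficient $\Phi_1=(f_1/f_4)(g_1'-g_4')=(f_1/f_4)(1-2e^z)$ has $T(r,\Phi_1)\sim r/\pi$; yet the surviving pair $(t,k)=(1,2)$ gives $T(r,e^{g_1-g_2})=T(r,e^z)\sim r/\pi$, so $T(r,\Phi_1)=o\big(T(r,e^{g_1-g_2})\big)$ fails. Equivalently, $\log T(r,e^{g_1-g_4})\asymp r$, which is not $o(r)$. Your final paragraph correctly isolates exactly this difficulty, but the resolution you propose --- that a bound logarithmic in one characteristic is automatically small against every other transcendental characteristic --- does not hold.

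A standard route that avoids this problem forgoes differentiation: divide by $f_ne^{g_n}$ to obtain $\sum_{j=1}^{n-1}(f_j/f_n)e^{g_j-g_n}\equiv -1$ and apply a second-main-theorem argument of the type recorded in the paper as Lemma~\ref{lm2}, using only that $N(r,1/F_j)+\overline N(r,F_j)\le 2T(r,f_j)+2T(r,f_n)$ for $F_j=(f_j/f_n)e^{g_j-g_n}$, which~(iii) controls directly; no derivative of an exponent ever enters the coefficients. Your differentiation scheme can also be repaired, but it requires a careful choice of which index to eliminate (roughly, one whose associated differences $g_j-g_n$ carry the \emph{largest} characteristics among all pairs), not an arbitrary one.
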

\vspace{6pt}

\begin{lem}\cite[Theorem 1.62]{yang2003}
\label{lm2}
Let $f_j~(j=1,2,\dots,n)$ be meromorphic functions and $f_k~(k=1,2,\dots,n-1)$ be non-constants. If $n\ge3$,
\begin{align*}
 \sum\limits_{j=1}^n{f_i}\equiv1 ,\,
 \sum\limits_{j=1}^n{N\left(r,\frac{1}{f_j}\right)}+
(n-1)\sum\limits_{j=1}^n\overline{N}(r,f_j)<(\lambda+o\,(1))T(r,f_k),r\notin{E},
\end{align*}
where $\lambda<1$, E is the set of finite
linear measure, then $f_n\equiv1$.
\end{lem}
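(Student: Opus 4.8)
The plan is to argue by contradiction: assuming $f_n\not\equiv1$, I will produce, for the non-constant index $f_k$, an inequality of the shape $T(r,f_k)\le(\lambda+o(1))T(r,f_k)+S(r,f_k)$ with $\lambda<1$. Since $f_k$ is non-constant we have $T(r,f_k)\to\infty$ and $S(r,f_k)=o(T(r,f_k))$ off a set of finite linear measure, so such an inequality is impossible and forces $f_n\equiv1$. The engine producing the inequality is a second-main-theorem estimate built from the Wronskian of the $f_j$ together with the lemma on the logarithmic derivative, and the counting-function hypothesis is precisely what is needed to absorb the right-hand side.

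The argument splits according to the $\mathbb C$-linear (in)dependence of $f_1,\dots,f_n$. If they are linearly dependent, a nontrivial constant relation $\sum_j c_j f_j\equiv0$ (which, by comparison with $\sum_j f_j\equiv1$, is not proportional to the all-ones vector) lets me eliminate one function from $\sum_j f_j\equiv1$ and rewrite it as an identity with $n-1$ summands; after checking that the non-constancy of at least one remaining function and the counting-function bound are inherited, an induction on $n$ applies and, upon unwinding, yields $f_n\equiv1$. This is consistent, since $f_n\equiv1$ itself makes $\sum_{j<n}f_j\equiv0$ a dependence. Thus the technical core is the independent case, carried out for general $n$. Here I differentiate $\sum_{j=1}^n f_j\equiv1$ to obtain the homogeneous relations $\sum_{j=1}^n f_j^{(i)}\equiv0$ for $i=1,\dots,n-1$, and form the Wronskian $W=W(f_1,\dots,f_n)$. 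Factoring one copy of each $f_j$ out of the $j$th column writes $W=\big(\prod_j f_j\big)\,P$, where every entry of $P$ is a logarithmic derivative $f_j^{(i)}/f_j$, so $m(r,P)=S(r,f_k)$; the differentiated relations identify $W$, up to sign, with a determinant $W_k$ formed from the derivatives of orders $1,\dots,n-1$ of the functions $f_j$ with $j\neq k$, and the analogous factorization gives $f_k=\pm Q_k/P$ with $m(r,Q_k)=S(r,f_k)$. Estimating $m(r,1/f_k)$ through this representation and bounding $N(r,1/f_k)$ together with the poles of the determinants by the zeros and poles of the $f_j$, I expect to reach the target inequality
\[
T(r,f_k)\le \sum_{j=1}^{n} N\!\left(r,\frac{1}{f_j}\right)+(n-1)\sum_{j=1}^{n}\overline N(r,f_j)+S(r,f_k),
\]
which, combined with the hypothesis, gives the contradiction above.

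The main obstacle will be the bookkeeping in the Wronskian estimate: one must account for the zeros of the Wronskian $W$ itself and verify that, after this accounting, the zeros of the $f_j$ contribute to the right-hand side only through $\sum_j N(r,1/f_j)$, with coefficient one and full multiplicity, while each pole of an $f_j$ enters the determinants with multiplicity at most $n-1$, so that the pole contribution is exactly $(n-1)\sum_j\overline N(r,f_j)$ and the coefficients match the hypothesis rather than exceeding it. A second delicate point is that all error terms must genuinely be $S(r,f_k)=o(T(r,f_k))$ referred to the non-constant $f_k$ — which is exactly why the index $k$ is restricted to $1,\dots,n-1$ and why the relation $\sum_j f_j\equiv1$ and the counting bound are needed to keep the growths of the several $f_j$ comparable — and that the linear-dependence reduction preserves both the hypothesis and the non-constancy required to keep the induction alive.
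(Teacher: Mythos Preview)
The paper does not supply its own proof of this lemma; it is quoted verbatim as a known result from Yang--Yi, \emph{Uniqueness Theory of Meromorphic Functions}. Your plan --- split according to the $\mathbb{C}$-linear (in)dependence of $f_1,\dots,f_n$, handle the independent case via the Wronskian representation $W=(\prod_j f_j)P$ together with the logarithmic-derivative lemma to reach $T(r,f_k)\le \sum_j N(r,1/f_j)+(n-1)\sum_j\overline N(r,f_j)+S(r,f_k)$, and in the dependent case eliminate one function and induct --- is precisely the standard argument found in that reference, so there is nothing to contrast.
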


\vspace{6pt}
  Chiang et al. \cite{Chiang1} established complete asymptotic relations of difference quotients for finite order meromorphic functions as follows:

 \vspace{6pt}

\begin{lem}\cite[Theorem 5.1]{Chiang1}
\label{lm3}
Let \(f\) be a non-constant meromorphic function of finite order \(\rho<1\) and \(\eta\in\mathbb{C}\). Then for any given \(\epsilon>0\), and integers \(0\leq j < k\), there exists a set \(E\subset[1, +\infty)\) of finite logarithmic measure, so that for all \(|z|\not\in E\cup[0,1]\), we have
\[
\left|\frac{\Delta_{\eta}^{k}f(z)}{\Delta_{\eta}^{j}f(z)}\right|\leq |z|^{(k - j)(\rho - 1+\epsilon)}.
\]
\end{lem}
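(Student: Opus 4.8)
The plan is to reduce the estimate to its first-order case by telescoping, and then to prove the first-order case by a Poisson--Jensen representation of $g(z+\eta)/g(z)$ in which the genus-zero structure forced by $\rho<1$ is decisive. Writing $g_m:=\Delta_\eta^m f$, I first record the identity
\[
\frac{\Delta_\eta^k f(z)}{\Delta_\eta^j f(z)}=\prod_{m=j}^{k-1}\frac{\Delta_\eta^{m+1}f(z)}{\Delta_\eta^m f(z)}=\prod_{m=j}^{k-1}\left(\frac{g_m(z+\eta)}{g_m(z)}-1\right),
\]
valid because $\Delta_\eta^{m+1}f=g_m(z+\eta)-g_m(z)$. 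Each $g_m$ is a finite linear combination of the shifts $f(z+i\eta)$, and shifting preserves the order, so $\rho(g_m)\le\rho<1$ for every $m$. If some intermediate $g_m$ $(j\le m<k)$ is constant, then $\Delta_\eta^k f\equiv0$ and the quotient vanishes, so the bound is trivial; hence I may assume every $g_m$ is a non-constant meromorphic function of order $\le\rho$. The whole problem thus reduces to the single first-order estimate: for non-constant meromorphic $g$ with $\sigma:=\rho(g)\le\rho<1$,
\[
\left|\frac{g(z+\eta)}{g(z)}-1\right|\le|z|^{\rho-1+\epsilon}
\]
for $|z|$ outside a set of finite logarithmic measure. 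Taking moduli in the identity and multiplying $k-j$ copies of this bound (absorbing constants into $\epsilon$ and using $|z|\ge1$ together with $\sigma\le\rho$) produces the exponent $(k-j)(\rho-1+\epsilon)$, with exceptional set the finite union of those arising at each factor.

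To prove the first-order estimate I would start from the complex Poisson--Jensen formula for $\log g$ on the disc $|w|=R$ with $R=2|z|$, and subtract the representations at $z+\eta$ and $z$. The boundary kernel difference simplifies to
\[
\frac{w+z+\eta}{w-z-\eta}-\frac{w+z}{w-z}=\frac{2\eta w}{(w-z-\eta)(w-z)},
\]
whose modulus on $|w|=R=2|z|$ is $O(|\eta|/|z|)$; multiplying by $\log|g(Re^{i\phi})|$ and integrating bounds the boundary contribution by $O(|\eta|/|z|)\cdot T(2|z|,g)=O(|z|^{\sigma-1+\epsilon})$, where $T(2|z|,g)=O(|z|^{\sigma+\epsilon})$ is used. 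The zeros $a_n$ and poles $b_p$ of $g$ with modulus $<R$ contribute two kinds of terms. The ``regularized'' parts coming from the factors $R^2-\bar a_n z$ are each $O(|\eta|/|z|)$ and number $n(R)=O(|z|^{\sigma+\epsilon})$, so they sum to $O(|z|^{\sigma-1+\epsilon})$; the factor $(1+\eta/z)^\ell$ from a zero or pole at the origin likewise contributes only $1+O(|z|^{-1})$, harmless since $-1\le\sigma-1+\epsilon$.

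The main obstacle is the residual zero/pole sum
\[
\sum_{|a_n|<R}\log\frac{z+\eta-a_n}{z-a_n}-\sum_{|b_p|<R}\log\frac{z+\eta-b_p}{z-b_p},
\]
which, once the boundedly close points (those with $|z-a_n|<2|\eta|$) are absorbed into the exceptional set, is dominated termwise by $2|\eta|/|z-a_n|$ and hence by the reciprocal-distance sum $\sum_{|a_n|<2|z|}|z-a_n|^{-1}$ (and its analogue for poles). This sum need not be small termwise, since zeros near $z$ produce large terms, so it must be controlled on average and the offending radii removed. The device is an area estimate: since $\int_{|z|\le 2r}|z-a_n|^{-1}\,dA(z)\le 8\pi r$ for each $a_n$ with $|a_n|\le2r$, summing over the $n(2r)=O(r^{\sigma+\epsilon})$ such zeros gives $\int_{|z|\le2r}\sum|z-a_n|^{-1}\,dA=O(r^{\sigma+1+\epsilon})$, so the mean of the reciprocal sum over the disc is $O(r^{\sigma-1+\epsilon})$. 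Comparing this mean against the slightly larger threshold $r^{\sigma-1+\epsilon'}$ with $\epsilon'>\epsilon$ shows the planar measure of the set on which the sum exceeds the threshold is $o(r^2)$, and converting this planar bound into a bound on the logarithmic measure of the exceptional radii is the delicate, technical heart of the argument (carried out exactly as in Gundersen's pointwise estimate for the logarithmic derivative).

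Granting this control, the whole complex logarithm $\log\big(g(z+\eta)/g(z)\big)$ is $O(|z|^{\sigma-1+\epsilon})$ off the exceptional set; exponentiating and using $|e^{u}-1|\le 2|u|$ for small $u$ gives the first-order estimate, and the telescoping of the first paragraph finishes the proof. I expect the radial exceptional-set construction — passing from the area mean of the reciprocal-distance sum to a finite-logarithmic-measure set of radii on which the pointwise bound fails — to be the one genuinely nontrivial step; everything else is bookkeeping with the order $\sigma\le\rho<1$ and the counting function $n(t)=O(t^{\sigma+\epsilon})$.
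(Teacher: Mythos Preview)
The paper does not prove this lemma at all; it is quoted verbatim as \cite[Theorem 5.1]{Chiang1} and used as a black box in the proofs of Lemma~\ref{lm5} and Theorem~\ref{th2.1}. There is therefore no ``paper's own proof'' to compare your proposal against.

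That said, your outline is essentially the strategy Chiang and Feng themselves use in \cite{Chiang1}: telescoping reduces the general estimate to the first-order case $|g(z+\eta)/g(z)-1|\le|z|^{\rho-1+\epsilon}$ for each iterated difference $g=g_m$, and the first-order case is obtained by differencing the Poisson--Jensen representation of $\log g$ and controlling the zero/pole contributions via a Gundersen-type exceptional-set argument. One minor remark: the conversion you sketch from an \emph{area} mean of $\sum|z-a_n|^{-1}$ to a finite-logarithmic-measure set of exceptional \emph{radii} is not quite the standard route; Chiang--Feng (following Gundersen) work directly with Cartan's lemma / the counting function $n(t)$ along rays, which avoids the planar-to-radial conversion you flag as the delicate step. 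Your version can be made to work, but it is a detour compared with the original argument.
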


\begin{lem}\cite[Theorem 11.4.2]{Chen1}
\label{chen}
Let $f(z)$ be a finite order transcendental entire function such that 
$\lambda(f - a(z)) < \rho(f)$, where $a(z)$ is an entire function and satisfies $\rho(a) < 1$. 
Let $n$ be a positive integer. If $\Delta_\eta^n f(z)$ and $f(z)$ share $a(z)$ \textup{CM}, 
where $\eta \in \mathbb{C}$ satisfies $\Delta_\eta^n f(z) \not\equiv 0$, then
\[
a(z) \equiv 0 \quad \text{and} \quad f(z) = c e^{c_1 z},
\]
where $c, c_1$ are two nonzero constants.
\end{lem}



\vspace{6pt}
In proving Theorem \ref{th1.1}, we need growth estimates for solutions of some  difference equations.
\vspace{6pt}

\begin{lem}\cite[Corollary 2.2]{Lu}
\label{lm2.4}
Let $f$ be a transcendental entire function such that $\Delta^n f\not\equiv 0$, $\lambda(f - a) < \rho(f) = \rho < \infty$ and $\rho > 1$, where $a$ is an entire function with $\rho(a) < \rho$, and let $b$ be an entire function such that $\rho(b) < \rho$. Suppose that $f$ is a solution of the difference equation
\begin{equation*}
\Delta^n f - b = (f - b) e^Q 
\end{equation*}
where $Q$ is a polynomial. Then $\deg Q = \rho(f) - 1$.
\end{lem}

\begin{lem}\cite[Corollary 3.2]{Lu}
\label{lm2.7}
Let $a_0, a_1, \dots, a_n$ be constants satisfying $a_0 a_n \neq 0$. If $f$ is a nonconstant meromorphic solution of the difference equation
\begin{equation*}
a_n f(z + n) + \cdots + a_1 f(z + 1) + a_0 f(z) = P(z),  
\end{equation*}
where $P$ is a polynomial, then either $\rho(f) \geq 1$ or $f$ is a polynomial. In particular, if $a_n \neq \pm a_0$, then $\rho(f) \geq 1$.
\end{lem}

\begin{lem}
\label{lm5}
Let \( f \) be a  transcendental entire function with $\rho_2(f)<1$, and let \( n \) be a positive integer. 
Assume that $a$ and $b$ are both  small entire functions such that $\lambda(f - a) < \rho(f)$, $\rho(a) < \rho(f)$. If $f$ is a solution of \begin{align}
\label{2.1}
    \Delta^n f - b = e^{Q}(f - b),
\end{align}
where   \(Q\) is an entire function, then \(f = a + P e^{h}\), where \(P\) is an entire function with \(\rho(P)<\rho(e^h)\) and \(h\) is a non-constant polynomial. In particular, if $\Delta^nf\equiv0$ and $\rho(a)<1$, then  $f(z)= a(z) + Aa(z) e^{c_1 z},$ $a=b$, $\Delta^{n}a\equiv0$, and $e^{c_1}=1$, where $a,b$ are polynomials and   $-A=\frac{1}{e^{c_1z}e^Q}$, $c_1$ are non-zero constants.
\end{lem}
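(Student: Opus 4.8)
The plan is to first run a Hadamard–factorization argument (independent of \eqref{2.1}) to pin down the shape of $f$, and then use \eqref{2.1} together with $\rho_2(f)<1$ to prove $\rho(f)<\infty$, which is all that is missing.

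\textbf{Reduction.} Exactly as in Remark \ref{r1}, if $\rho(f)<1$ then $\rho(f-a)=\max\{\rho(a),\rho(f)\}<1$ while $\lambda(f-a)<\rho(f)\le\rho(f-a)$, impossible since a canonical product has $\lambda=\rho$. Hence $\rho(f)\ge 1>\rho(a)$, so $\rho(f-a)=\rho(f)$; writing $f-a=\pi e^{h}$ with $\pi$ the canonical product built from the zeros of $f-a$ gives $\rho(\pi)=\lambda(f-a)<\rho(f)=\rho(e^{h})$, $h$ non-constant entire, $T(r,e^{h})=T(r,f)+S(r,f)$, and both $a$ and $\pi$ small functions of $f$; moreover $\rho_2(e^{h})=\rho_2(f)<1$, so the difference analogue of the lemma on the logarithmic derivative is available for $e^{h}$, for $\pi$, and for $f-b$. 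Since a zero-free entire function of finite order is the exponential of a polynomial, it now suffices to prove $\rho(f)<\infty$, i.e. that $h$ is a polynomial.

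\textbf{Showing $h$ is a polynomial.} Substituting $f=a+\pi e^{h}$ into \eqref{2.1} and using $\Delta^{n}f=\Delta^{n}a+\sum_{i=0}^{n}(-1)^{n-i}\binom{n}{i}\pi(z+i)e^{h(z+i)}$ I would obtain
\[
\sum_{i=1}^{n}(-1)^{n-i}\binom{n}{i}\pi(z+i)e^{h(z+i)}+\bigl[(-1)^{n}-e^{Q}\bigr]\pi e^{h}=e^{Q}(a-b)+b-\Delta^{n}a .
\]
To run a Borel–type argument one needs $e^{Q}$ to be a small function of $f$. When $\Delta^{n}f\not\equiv 0$ and $a\not\equiv b$, this follows because $N(r,1/(\pi e^{h}))=N(r,1/\pi)=S(r,f)$, so $0$ and $\infty$ already saturate the (small-function) second main theorem for $\pi e^{h}$ at the target $b-a$, forcing $N(r,1/(f-b))=T(r,f)+S(r,f)$ and hence $m(r,1/(f-b))=S(r,f)$; then $e^{Q}=(\Delta^{n}f-b)/(f-b)$, split via $f(z+i)/(f-b)=(f-b)(z+i)/(f-b)(z)+b(z+i)/(f-b)(z)$ and the difference logarithmic derivative lemma, gives $m(r,e^{Q})=S(r,f)$, hence $T(r,e^{Q})=S(r,f)$ since $e^{Q}$ is zero-free (if $a\equiv b\equiv 0$ one uses $e^{Q}=\Delta^{n}f/f$ directly). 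With $e^{Q}$ small, assume $h$ transcendental; then $\rho(h)=\rho_2(f)<1$, and for $i\ne j$ the functions $h(z+i)$ and $h(z+i)-h(z+j)$ are non-constant (otherwise $h$ is linear-plus-nonconstant-periodic, forcing $\rho(h)\ge1$, or $h$ is linear, hence a polynomial), so the exponents $h(z+n),\dots,h(z),0$ have pairwise non-constant differences and Lemma \ref{lm1} forces all coefficients to vanish, in particular $\pi(z+n)\equiv0$, whence $\pi\equiv0$ and $f\equiv a$, contradicting $\rho(f)\ge1>\rho(a)$. If instead $\Delta^{n}f\equiv0$, the same Borel argument applied to $\Delta^{n}(\pi e^{h})=-\Delta^{n}a$ rules out transcendental $h$. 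In all cases $h$ is a non-constant polynomial; with $P:=\pi$ this is the first assertion, and $\rho(P)=\lambda(f-a)<\deg h=\rho(e^{h})$.

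\textbf{The case $\Delta^{n}f\equiv0$.} Here \eqref{2.1} becomes $f-b=-b\,e^{-Q}$, so $f=b(1-e^{-Q})$; with $f=a+\pi e^{h}$ this reads $\pi e^{h}+b\,e^{-Q}=b-a$, and the second main theorem for the zero-free $e^{h}$ (otherwise omitting the small function $(b-a)/\pi$) forces $a\equiv b$. Then $\pi e^{h}=-b\,e^{-Q}$, so $e^{h+Q}=-b/\pi$ is a zero-free small function of $f$; since $e^{h}$ has finite order $\deg h$, $h+Q$ must be a polynomial of degree $<\deg h$. Being a finite-order entire solution of $\Delta^{n}f\equiv0$, $f=\sum_{k<n}z^{k}\psi_{k}$ with each $\psi_{k}$ periodic of finite order, hence a polynomial in a single exponential, so $\rho(f)\le 1$ and therefore $\deg h=1$, say $h(z)=c_{1}z+c_{0}$; then $h+Q$ is constant, $Q$ has degree $1$, and unwinding $f-a=f-b=-b e^{-Q}=\pi e^{h}$ shows $\pi$ and $b=a$ are polynomials, that $\Delta^{n}b=0$ and $e^{c_{1}}=1$, and that $f=a+A a e^{c_{1}z}$ with $-A=1/(e^{c_{1}z}e^{Q})$, as claimed.

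\textbf{Main obstacle.} The delicate step is the application of Lemma \ref{lm1} when $h$ is hypothetically transcendental of order $<1$: by the difference logarithmic derivative lemma the exponentials $e^{h(z+i)-h(z+j)}$ are themselves small functions of $f$, so the usual ``coefficients dominated by the exponential gaps'' hypothesis (iii) is not automatic. I expect to bypass this by pushing the difference quotients into the exponent — writing $e^{h(z+i)-h(z)}=\prod_{l=0}^{i-1}w(z+l)$ with $w=e^{h(z+1)-h(z)}$, so that the identity for $e^{Q}$ (resp. for $\Delta^{n}(\pi e^{h})$) turns into an exponential sum with \emph{constant} (resp. polynomial-dominated) coefficients — or, alternatively, by invoking Picard's theorem: zero-freeness of $e^{Q}$ forces the relevant auxiliary entire functions (such as $h(z+1)-h(z)$) to be constant, whence $h$ is a polynomial. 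Verifying these growth/omitted-value conditions, and disposing of the residual configuration $a\equiv b\not\equiv 0$ with $\Delta^{n}f\not\equiv 0$, is where the real work lies; everything else is bookkeeping with Nevanlinna functionals and the difference logarithmic derivative lemma.
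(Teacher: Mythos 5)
Your reduction (Hadamard factorization of $f-a$, smallness of $e^{Q}$ via the second main theorem with target $b-a$ and the difference logarithmic derivative lemma) matches the paper's setup, but the proposal never actually proves the one thing the lemma is really about: that $h$ cannot be transcendental of order less than $1$. You correctly diagnose why the naive application of Lemma \ref{lm1} to the exponents $h(z+n),\dots,h(z),0$ breaks down --- when $\rho(h)<1$ the factors $e^{h(z+i)-h(z+j)}$ are themselves small functions of $e^{h}$ (they are zero-free, so their characteristic equals their proximity function, which the difference logarithmic derivative lemma makes $S(r,e^{h})$), so hypothesis (iii) of Lemma \ref{lm1} fails --- but you then only gesture at two unverified repairs (``push the difference quotients into the exponent'', ``invoke Picard'') and explicitly defer both their verification and the entire configuration $a\equiv b\not\equiv0$, $\Delta^{n}f\not\equiv0$ to ``where the real work lies''. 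That configuration and that verification \emph{are} the content of the lemma. The paper's route is different: it first normalizes the identity, using the common-zero counting argument on \eqref{2.6} to force either $a\equiv b$ with $\Delta^{n}a=a$ (whence $a$ is constant by Lemma \ref{lm3} and then $a=b=0$) or $e^{Q}=\frac{\Delta^{n}a-b}{a-b}$, and only then applies the Borel-type Lemmas \ref{lm2} (non-constant $Q$) and \ref{lm1} (constant $Q$) to the normalized equation \eqref{2.7}, whose coefficients are the difference quotients $P(z+j)/P(z)$ rather than the shifts $P(z+j)$. Whatever one thinks of the hypothesis-checking there, the paper commits to a concrete argument at exactly the point where your proposal stops.

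There is also an outright error in your treatment of $\Delta^{n}f\equiv0$: a periodic entire function of finite order is not ``a polynomial in a single exponential'', and the conclusion $\rho(f)\le1$ drawn from it is unjustified --- the theta function $\sum_{m\in\mathbb{Z}}e^{-m^{2}}e^{2\pi imz}$ is entire, $1$-periodic and of order $2$, so finite-order solutions of $\Delta^{n}f=0$ need not have order at most $1$. The paper reaches $\deg h=1$ differently: from \eqref{2.3} with $a=b$ every zero of $P$ is a zero of $a$, so $\rho(P)<1$, and then Lemma \ref{lm1} applied to \eqref{2.4} (with $h$ already known to be a polynomial, so the exponent differences $h(z+k)-h(z+i)$ have order $\deg h-1$, which dominates $\rho(P)$ unless $\deg h=1$) forces $h=c_{1}z$, after which the leading-coefficient computation gives $(e^{c_{1}}-1)^{n}=0$ and hence $e^{c_{1}}=1$. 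You would need to replace your periodicity claim by an argument of this kind.
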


\begin{proof}[Proof of  Lemma \ref{lm5}] 
Since  $ \lambda(f - a) < \rho(f)$, we get $f-a=Pe^{h}$, where $h$ and $P$ are entire functions such that $\rho(P)<\rho(f)$. It fallows that $a$ is a small function of $f$, we get $T(r,f-a)=(1+o(1))T(r,f)\le T(r,P)+T(r,e^h)$.  Therefore, $\rho(f-a)=\rho(f)=\rho(Pe^h)\le \max\{\rho(P),\rho(e^h)\}$. By $\rho (P)<\rho(f)$, we get $\rho (f)\le \rho(e^h)$. 
 However, $\mu(e^h)=\rho(e^h)$, thus $\rho(P)<\mu(e^h)$, which implies $P$ is a small function of $e^h$. From $f-a=Pe^h$, where $a$ is a small function of $f$ and $P$ is a small function of $e^h$, we get $(1+o(1))T(r,f)=(1+o(1))T(r,e^h)$. 
Thus,  $\rho(f)=\mu(f)$ and  $P,b$ and $a$ are small functions of $e^h$.

Substituting the form of \(f\) into \eqref{2.1} yields
\begin{align}
\label{2.2}
\frac{\sum_{j = 0}^{n} C_{n}^{j} (-1)^{n - j} P(z + j) e^{h(z + j)}+\Delta^{n}a - b}{Pe^{h}+a - b}=e^{Q}.
\end{align}

\begin{case}\rm{\(\Delta^n f\equiv0\). 
 
 From \eqref{2.2}, we get
\begin{align}
\label{2.3}
\frac{-b}{Pe^h+a-b}=e^{Q}.
\end{align}
If $a\not\equiv b$, then by the  Nevanlinna second main theorem and \eqref{2.3}, we get
\begin{align*}
    T(r,Pe^h)< N(r,\frac{1}{Pe^h})+N(r,Pe^h)+N(r,\frac{1}{Pe^h+a-b})+S(r,Pe^h)\le  S(r,Pe^h).
\end{align*}
This is impossible.    Therefore, $a\equiv b$. 
By \eqref{2.3} and $a\equiv b$, we have the zero of $P$ must the zero of $b$, which yields $\lambda(P)=\rho(P)<1$.

Due to \(\Delta^n f\equiv0\), 
we deduce that  
\begin{align}
\label{2.4}
\sum_{j = 0}^{n} C_{n}^{j} (-1)^{n - j} P(z + j) e^{h(z + j)}+\Delta^{n}a\equiv0.
\end{align}
Since $\rho(P)<1$ and $\rho(a)<1$,
we claim that  there exist $i,k~ (0\le i<k\le n)$  such that $h(z+k)-h(z+i)=d$, where $d$ is a constant.  Otherwise, using Lemma \ref{lm1} to \eqref{2.4}, we get $P\equiv0$, which is impossible.  Differentiating    $h(z+k)-h(z+i)=d$ yields $h'(z+k)-h'(z+i)=0$. If $h$ is transcendental, then $h'$ is  a periodic function with period $i-k$. Hence, $\rho (h)\ge 1$. This contradicts $\rho (h)<1$.  Therefore, $h$ is a polynomial.

$h(z+k)-h(z+i)=d$ yields  $h$ is a polynomial of degree 1. Without loss of generality, we may assume that $h=c_1z$, where $c_1$ is a nonzero constants.
From \(\Delta^n f\equiv0\), we get 
    $e^h(z)(\sum_{j = 0}^{n} C_{n}^{j} (-1)^{n - j} P(z + j) e^{h(z + j)-h(z)})+\Delta^{n}a\equiv0.$
 It implies $\sum_{j = 0}^{n} C_{n}^{j} (-1)^{n - j} P(z + j) e^{c_1j}\equiv0$ and $\Delta^{n}a\equiv0$. By  $\rho(P)<\rho(f)=1$ and Lemma \ref{lm2.7},  we get  $P(z)$ is a polynomial.
 It leads to
\[
0=\sum_{j = 0}^{n}(-1)^{n - i}C_{n}^{j}(e^{ c_1})^{j}=(e^{c_1}-1)^{n},
\]
then $e^{c_1}=1$. By \eqref{2.3} we get $-Aa=P$, where $-A=\frac{1}{e^Qe^{c_1z}}$ is a non-zero constant.

}
\end{case}

\begin{case}\(\Delta^n f\not\equiv0\). \rm{

We will use proof by contradiction, so let's assume without loss of generality that $h$ is a transcendental entire function.
Let $w(z)=\sum_{j = 0}^{n} C_{n}^{j} (-1)^{n - j} P(z + j) e^{h(z + j)-h(z)}$. We can  rewrite \eqref{2.2} as
\begin{align}
\label{2.5}
\frac{we^h+\Delta^n a-b}{Pe^h+a-b}=e^Q.
\end{align}

If $w\equiv0$, then
 \begin{align}
\label{2.51}
\sum_{j = 0}^{n} C_{n}^{j} (-1)^{n - j} P(z + j) e^{h(z + j)}=0.
 \end{align}
  We claim that  there exist integers $i_0,k_0~ (0\le i_0<k_0\le n)$  such that $h(z+k_0)-h(z+i_0)$ is a polynomial.  Otherwise, $h(z+k)-h(z+l)$ is transcendental for any $m,k~ (0\le l<k\le n).$ Thus, $\rho(P)<\mu(e^{h(z+k)-h(z+l)})=\infty$, which implies $P$ is the small function of $e^{h(z+k)-h(z+l)}$. 
 Using Lemma \ref{lm1} to \eqref{2.51}, we get $P\equiv0$, which is impossible. Therefore, $h(z+k_0)-h(z+i_0)$
 is a polynomial of degree $m$. We deduce $h^{(m+1)}(z+k_0)-h^{(m+1)}(z+i_0)\equiv 0$ and $h^{m+1}$ is a periodic function.  Hence, $\rho (h)\ge 1$, this contradicts $\rho (h)<1$.  Thus, $w\not\equiv0$
 
 By difference logarithmic derivative lemma \cite{Halburd1},   we get that  $w$ is a small function of $e^h$.  Equation   \eqref{2.5} yileds
\begin{align}
\label{2.6}
\frac{(e^h+\frac{\Delta^na-b}{w})w}{(e^h+\frac{a-b}{P})P}=e^Q.
\end{align}

			\setcounter{subsection}{2}
			\setcounter{subcase}{0}
			\renewcommand{\thesubcase}{\arabic{subsection}.\arabic{subcase}}

\begin{subcase}
\rm{
$a\equiv b$.
 
 If $\Delta^na-b\not\equiv0$, by \eqref
 {2.6}  and the Nevanlinna  second main theorem, we get  $T(r,e^h)< N(r,\frac{1}{e^h+\frac{\Delta^na-b}{w}})+S(r,e^h)\le N(r,\frac{1}{P})+S(r,e^h)$, which is impossible.   Therefore, $\Delta^na\equiv b\equiv a$. Equation \eqref{2.6} implies $e^Q=\frac{w}{P}$, which is 
\begin{align}
\label{2.7}
\sum_{j = 1}^{n} C_{n}^{j} (-1)^{n - j} {\frac{P(z + j)}{P(z)}}   e^{h(z + j)-h(z)}+(-1)^n=e^Q.
\end{align}


 We claim that  there exist an integer $k$ $(0<k\le n)$  such that $h(z+k)-h(z)$ is a polynomial.  Otherwise,  we suppose that $h(z+j)-h(z)$ is transcendental for any integer $j$ satisfies $0< j \le n.$
Since $h(z+j)-h(z)$ is 
transcendental, we obtain $\rho(\frac{P(z+j)}{P(z)})<\mu(e^{h(z+j)-h(z)})=\infty$. Therefore, $\frac{P(z+j)}{P(z)}$ is a small function of  $e^{h(z+j)-h(z)}$. 
 
If $n=1$, then \eqref{2.7} reduces in to $\frac{P(z+1)}{P(z)}e^{h(z+1)-h(z)}-1=e^Q$. 
Using the Nevanlinna second main theorem, we can get 
$T(r,\frac{P(z+1)}{P(z)}e^{h(z+1)-h(z)})<\frac{1}{\frac{P(z+1)}{P(z)}e^{h(z+1)-h(z)}-1}+S(r,\frac{P(z+1)}{P(z)}e^{h(z+1)-h(z)})=S(r,\frac{P(z+1)}{P(z)}e^{h(z+1)-h(z)})$, which is
a contradiction. 

  If $n>1$, using Lemma \ref{lm2} to \eqref{2.7} implies $e^Q=(-1)^n$. 
  Thus,  \eqref{2.7} reduces in to 
\begin{align}
\label{2.71}
\sum_{j = 1}^{n} C_{n}^{j} (-1)^{n - j} {\frac{P(z + j)}{P(z)}}   e^{h(z + j)-h(z)}=0.
\end{align}
By using the method of analyzing equation \eqref{2.51} to \eqref{2.71} , we can similarly obtain  a contradiction. 

 Therefore,  there exist $k$  such that $h(z+k)-h(z)$ is a polynomial.
We deduce   $\rho (h)\ge 1$, this contradicts $\rho (h)<1$.   
 }
\end{subcase}

\begin{subcase} \rm{
$a\not\equiv b$.
  By the Nevanlinna  second main theorem, we get $N(r,\frac{1}{e^h+\frac{a-b}{P}})=O(T(r,f))$. From \eqref{2.6}, we find that the zero of $e^h+\frac{a-b}{P}$ must be the zero of $w$ and $e^h+\frac{\Delta^na-b}{w}$. We denote by \(N_1\) the  counting function of those common zeros of $e^h+\frac{a-b}{P}$  and $e^h+\frac{\Delta^na-b}{w}$. Since $w$ is a small function of $e^h$,  then $N_1=O(T(r,f))$.

  Let $z_0$ be a zero of $e^h+\frac{a-b}{P}$ such that $w(z_0)\not=0$, then $z_0$ is a zero of $e^h+\frac{\Delta^na-b}{w}$. This implies $z_0$ is a zero of $\frac{a-b}{P}-\frac{\Delta^na-b}{w}$. Thus, $N_1\le N(r,\frac{1}{\frac{a-b}{P}-\frac{\Delta^na-b}{w}})$. Since  $\frac{a-b}{P}-\frac{\Delta^na-b}{w}$ is a small function of $e^h$, we get $\frac{a-b}{P}=\frac{\Delta^na-b}{w}$. Substituting it into \eqref{2.6} yields $\frac{w}{P}=e^Q$, which  can be  be written as \eqref{2.7}. Therefore,  the remaining proof is the same as that in Subcase 2.1,  we omit it.
 
  }
\end{subcase}
}
\end{case}
\end{proof}

\subsection{Proof of Theorem \ref{th1.1}}

Since \(\Delta^n f-b\) and \( f -b\) share \(0 \) \text{CM},  we have equation \eqref{2.1}.  By Lemma \ref{lm5},  we have  \(f = a + P e^{h}\), where \(P\) is an entire function with \(\rho(P)<\rho(e^h)\),  \(h\) is a non-constant polynomial, $P,b$ and $a$ are small functions of $e^h$.

 If $\Delta^nf\equiv0$ and $\rho(a)<1$, then   Lemma \ref{lm5} yields $f = a + Aa e^{\gamma z},$ $a=b$, $e^{\gamma}=1$ and  $\Delta^n f-b=e^Q(f-b)$, where $a,b$ are polynomials, $\Delta^na\equiv 0$ and $-A=\frac{1}{e^Qe^{\gamma z}}$, $ \gamma$ are non-zero constants.  Thus, Theorem \ref{th1.1}-(ii) is proved.
 
In the following, we consider the case where  $\Delta^nf\not\equiv0$. 
By  the proof of Lemma \ref{lm5}, we get   $\rho(f)=\deg(h)\ge 1$.  
Let $w(z)=\sum_{j = 0}^{n} C_{n}^{j} (-1)^{n - j} P(z + j) e^{h(z + j)-h(z)}$,  by \eqref{2.2}, we also have \eqref{2.5}. Next, we divide  the proof into the following two cases.

\vspace{6pt}

 \setcounter{case}{0}

 \begin{case}
 \rm{
 $\rho(f)=\deg (h)>1$.

Assume that  $a\equiv b$, then using Lemma  \ref{lm2.4}  to \eqref{2.1} yields  $\deg(Q)=\deg (h)-1\ge1$. 
If $w\equiv0$, then from \eqref{2.5},  we get  $T(r,e^h)+S(r,e^h)=T(r,e^Q)$. This  is a contradiction. Therefore, $w\not\equiv0$.  Using the   identity  of Valiron-Mohon'ko \cite[Theorem 2.2.5]{Laine1}  to \eqref{2.5} implies $\Delta^na=b$. Thus \eqref{2.5} yields $\frac{w}{P}=e^Q$, which implies $\frac{\Delta^n (f-a)}{f-a}=\frac{w}{P}=e^Q$. Therefore, $\Delta^n(f-a) $ and $f-a$ share $0$ CM. Since  $\lambda(f-a)<\rho(f-a)$, by using Lemma \ref{chen}, we get $f-a=ce^{c_1z}$, where $c,c_1$ are non-zero constants. This is a contradiction to $\rho(f)>1$.
 

Now we consider the case $a\not\equiv b$. If $w\equiv0$, then from \eqref{2.5}, we get the zero of $Pe^h+a-b$ must be the zero of $\Delta^na-b$, which is a contradiction  according to $a,b,p$ are small functions of $e^h$.  Therefore $w\not\equiv0$,
by employing the same methods as in Subcase 2.2 of the proof of Lemma \ref{lm5}, we can conclude that $\frac{w}{P}=e^Q$. Therefore, $\Delta^n(f-a) $ and $f-a$ share $0$ CM.  Since  $\lambda(f-a)<\rho(f-a)$, by using Lemma \ref{chen}, we get $f-a=ce^{c_1z}$, where $c,c_1$ are non-zero constants. This is a contradiction to $\rho(f)>1$.
 
}
 \end{case}
 
\vspace{4pt}

 \begin{case}
 \rm{
 $\rho(f)=\deg (h)=1$.

  Let $h=\gamma z$, where $\gamma$ is a non-zero constant.  If $w\equiv0$, then from \eqref{2.5}, we get the zero of $Pe^h+a-b$ must be the zero of $\Delta^na-b$.
By using the second main theorem to $Pe^h$, we get $a=b$. Since $\rho(a)<\rho(f)=1$,  $\lambda(f-a)<\rho(f)$ and  $\Delta^nf $ and $f$ share $a$ CM, we use Lemma \ref{lm2.4} to  yield $a=0$. From  \eqref{2.5}, we have $f \equiv  0$, this is a contradiction. Thus $w\not\equiv0$.

If $a\equiv b$, by employing the same methods as in Subcase 2.1 of the proof of Lemma \ref{lm5}, we can conclude that $\Delta^na\equiv b\equiv a$ and \eqref{2.7} holds.
If $a$ is non-constant,by Lemma \ref{lm3} and \(\rho(a)<1\), we deduce that  there exists a finite logarithmic measure \(E\) and a small positive constant \(\epsilon\) such that \[1=\left|\frac{\Delta^{n}a}{a}\right|\leq|z|^{n(\rho(a)-1+\epsilon)}\to 0,\ as\ |z|\to\infty,  |z| = r\notin E ,\]which is impossible. Hence,   $a$ is a constant. $\Delta^na=a$ implies $a=0$, which means $b=0$. 
Therefore, $\Delta^nf $ and $f$ share $0$ CM. By   Lemma  \ref{chen}, 
we get $f=pe^{\gamma z}$, where $p$ is a non-zero constant. Then, \eqref{2.7} yields
\begin{align}
\label{2.9}
(e^{\gamma}-1)^{n}-e^Q=\sum_{j = 1}^{n}C_{n}^{j}(-1)^{n - j}e^{\gamma j}+((-1)^{n}-e^Q)=0.
\end{align}

Next, we consider the case  $a\not\equiv b$. By employing the same methods as in Subcase 2.2 of the proof of Lemma \ref{lm5}, we can conclude that $e^Q=\frac{w}{P}$, which is \eqref{2.7} . Therefore  $\Delta^n(f-a) $ and $f-a$ share $0$ CM. Since  $\lambda(f-a)<\rho(f-a)$, by using Lemma \ref{chen}, we derive the desired result
$f=a + pe^{\gamma z}, $
where $p$ is a non-zero constant. From \eqref{2.9}, we get $(e^{\gamma}-1)^n=e^Q.$

Thus, we conclude that  $f(z) = a(z) + pe^{\gamma z},$
 and $\Delta^n f-b=q(f-b)$, where \( p, \gamma, q \) are  non-zero constants and $(e^{\gamma}-1)^n=q.$ In particular, if \( a = b \), then \( a=0 \).

}    $\hfill\square$
 \end{case}

\section{{\texorpdfstring{$f'$ and $f(z+1)$ share an entire function}{f' and f(z+1) share an entire function}}}\label{sec4}
In 2014, Liu et al. \cite{Liu2} obtained the following result on the  delay-differential analogues of the Br\"uck conjecture.

\vspace{6pt}

\begin{thmx}
\label{thh}
Let $f$ be a transcendental entire function with finite order. Suppose that $f$ has a Picard exceptional value $a$ and $f'(z)$ and $f(z + 1)$ share the constant $b$ CM, then
\begin{align*}
    \frac{f'(z)-b}{f(z + 1)-b}=A,
\end{align*}
 where $A$ is a non-zero constant. Furthermore, if $b\neq0$, then $A = \frac{b}{b - a}$.
\end{thmx}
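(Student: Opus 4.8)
The plan is to turn the hypotheses into one exponential functional equation and then analyze it with a Borel-type theorem. Since $f$ is transcendental entire of finite order with Picard exceptional value $a$, the function $f-a$ is entire, zero-free and of finite order, so by Hadamard factorization $f=a+e^{p}$ with $p$ a polynomial and $m:=\deg p=\rho(f)\ge 1$; hence $f'(z)=p'(z)e^{p(z)}$ and $f(z+1)=a+e^{p(z+1)}$. Because $f$ is transcendental, neither $f'(z)-b$ nor $f(z+1)-b$ vanishes identically, and since they share $b$ CM their quotient is an entire, zero-free function of finite order, so $\dfrac{f'(z)-b}{f(z+1)-b}=e^{q(z)}$ for some polynomial $q$. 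Substituting $f=a+e^{p}$ gives
\[
p'(z)\,e^{p(z)}-e^{q(z)}e^{p(z+1)}-(a-b)\,e^{q(z)}-b\equiv 0 \qquad(\star).
\]
The first assertion is exactly the statement that $q$ is constant.

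Next I would clear the degenerate cases. If $a=b$, then $f(z+1)-b=e^{p(z+1)}$ is zero-free, so $f'(z)-b$ is zero-free too; applying Nevanlinna's second main theorem to the entire function $f'=p'e^{p}$ with the values $0$, $b$, $\infty$ (distinct when $b\ne 0$), together with $\overline{N}(r,1/f')=\overline{N}(r,1/p')=O(\log r)$, yields $T(r,f')\le O(\log r)+S(r,f')$, contradicting that $f'$ is transcendental; thus $a=b$ forces $b=0$, and then matching the finite zero set of $f'=p'e^{p}$ with the empty zero set of $f(z+1)=e^{p(z+1)}$ forces $p'$ to be a nonzero constant, i.e. $f=e^{cz+d}$ with $c\ne 0$, and $f'(z)/f(z+1)=ce^{-c}$ as required. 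Likewise, if $b=0$ and $a\ne 0$ then $f'$ has finitely many zeros while $f(z+1)=a+e^{p(z+1)}$ has infinitely many (Picard), contradicting the sharing. So from now on $a\ne b$ and $b\ne 0$.

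Now I would apply the Borel-type Lemma \ref{lm1} to $(\star)$. Assume $q$ is non-constant. The four exponents $p(z)$, $q(z)+p(z+1)$, $q(z)$, $0$ carry polynomial coefficients $p'(z)$, $-1$, $-(a-b)$, $-b$; if no two of these exponents differed by a constant, Lemma \ref{lm1} (polynomial coefficients being negligible against non-constant polynomial exponent-differences) would force all four coefficients to vanish, which is impossible since one of them is $-1$. Since $m\ge 1$ makes $p(z)$ and $p(z+1)$ non-constant, the only exponent pairs that can have a constant difference correspond to $q=-p(z+1)+c_{0}$, or $q=p(z)+c_{0}$, or $q=p(z)-p(z+1)+c_{0}$ (the last compatible with $q$ non-constant only if $m\ge 2$). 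In each of these three sub-cases I would substitute the resulting $q$ back into $(\star)$, obtaining an exponential identity in two or three terms whose exponents still have pairwise non-constant differences — here $a\ne b$ (and $b\ne 0$) is what keeps the relevant coefficients from dropping out — and Lemma \ref{lm1} again forces a nonzero coefficient to vanish, a contradiction. Hence $q$ is constant, $e^{q}=A$ is a nonzero constant, and $\dfrac{f'(z)-b}{f(z+1)-b}=A$.

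Finally, with $q$ constant, $(\star)$ becomes $p'(z)e^{p(z)}-Ae^{p(z+1)}=b+A(a-b)$. If $m\ge 2$ the exponents $p(z)$, $p(z+1)$, $0$ again have pairwise non-constant differences (note $\deg(p(z)-p(z+1))=m-1\ge 1$), so Lemma \ref{lm1} forces $A=0$ — a contradiction; hence $m=1$, say $p(z)=cz+d$ with $c\ne 0$. Then the identity reads $(c-Ae^{c})e^{cz+d}=b+A(a-b)$, and since $e^{cz+d}$ is non-constant we get $c=Ae^{c}$ and $b+A(a-b)=0$; when $b\ne 0$ the second equation gives $A(b-a)=b$, and as $b\ne 0$ forces $a\ne b$ (by the degenerate-case analysis), $A=\dfrac{b}{b-a}$. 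The step I expect to be the main obstacle is the third paragraph: carefully enumerating which exponent differences in $(\star)$ can be constant, and running the several short sub-arguments that eliminate them while keeping track of the degenerate configurations in which a term of $(\star)$ drops out.
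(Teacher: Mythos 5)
Your proof is correct, but it is worth noting that the paper does not actually prove Theorem~D: it is quoted verbatim from Liu--Dong \cite{Liu2}, and the paper's own contribution is the generalization Theorem~\ref{th3}, whose proof proceeds quite differently. There, the authors write $f=a+pe^{h}$, form the quotient $\frac{(e^h+\frac{a'-b}{w_1})w_1}{(e^h+\frac{a(z+1)-b}{w_2})w_2}=e^{Q}$, and use the second main theorem applied to $e^{h}$ (together with the difference/delay analogue of the logarithmic derivative lemma to control $w_1,w_2$) to force the small-function identities $\frac{a'-b}{w_1}=\frac{a(z+1)-b}{w_2}$ and hence $e^{Q}=\frac{a'-b}{a(z+1)-b}$; the reduction to $h=\beta z$ and $\deg p\le 1$ then comes from comparing polynomial coefficients. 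Your route instead reduces everything to the single four-term exponential identity $(\star)$ and eliminates the non-constant possibilities for $q$ by Borel's unit theorem (Lemma~\ref{lm1}), exploiting that all coefficients are constants or polynomials. This is cleaner and essentially self-contained in the finite-order, constant-target setting of Theorem~D (your case analysis of which exponent differences can be constant is complete, and the degenerate cases $a=b$ and $b=0$ are handled correctly), but it does not extend to the paper's setting where $a,b$ are merely small functions and $\rho_2(f)<1$, since there $h$ need not be a polynomial and condition (iii) of Lemma~\ref{lm1} is no longer automatic --- that is precisely what the second-main-theorem machinery in the proof of Theorem~\ref{th3} buys. One small caveat: you read ``Picard exceptional value'' as $f-a$ being zero-free; if it is instead taken to mean finitely many zeros (the paper's ``generalized'' notion), you would carry a polynomial factor $P$ in $f=a+Pe^{p}$, which changes nothing essential in your Borel argument but should be said.
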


\vspace{6pt}

We find that the following example shows that Theorem \ref{thh} still holds if $a, b$ are polynomials.

\vspace{6pt}

\begin{exa}
\label{e31}
Let $f(z)=z+ze^{z}$, $a=z$, $b=z+1+\frac{ez}{1-e}$, then $\frac{f'(z)-b}{f(z+1)-b}=\frac{1}{e}$.
\end{exa}

\vspace{4pt}

Motivated by the difference analogues to Nevanlinna theory for meromorphic functions of hyper order strictly less than 1, one may ask the following question:
 
\vspace{4pt}

\begin{Que}
\label{q3}
Whether the theorem \ref{thh} still holds if the entire function $f$ is of hyper
order strictly less than 1 and the conditions on  $a,b$ are weakened?

\end{Que}

    \vspace{4pt}

If $f-a$ has finitely many zeros, then meromorphic
function $a$ is called a generalized Picard exceptional function of $f$.
Under the condition that $f$ has a generalized Picard exceptional small entire function, we give an affirmative  answer to the Question \ref{q3}  by proving the following theorem.

    \vspace{4pt}

\begin{thm}
\label{th3}
Let $f$ be a transcendental entire function with $\rho_2(f)<1$. Let $a$ and $b$ be small entire functions of $f$ such that $a$ is a generalized Picard exceptional function of $f$. Suppose  $f'(z)$ and $f(z + 1)$ share  the function $b(z)$ CM. Then,
 \[
  f(z) =a(z)+p(z)e^{\beta z} \quad \text{and} \quad  \frac{f'(z)-b(z)}{f(z+1)-b(z)}=\frac{\beta}{e^{\beta}},
 \]
  where $\beta$ is a non-zero constant and $p$ is a non-zero polynomial with $\deg (p)=k\le 1$. What's more, if $k=1$,  then \( \beta = 1 \). If $a(z+1)\not\equiv b(z)$, then $\frac{\beta}{e^{\beta}}=\frac{a'(z)-b(z)}{a(z+1)-b(z)}$. 
\end{thm}

 \vspace{4pt}

 We provide  the following  examples   to illustrate  the results of Theorem \ref{th3}.

 \vspace{4pt}

\begin{exa}

\begin{enumerate}
		\item[\rm{(1)}]Let $f(z)=ze^{ z}$, then $f'(z)=(1+z)e^z$ and $\frac{f'(z)}{f(z+1)}=\frac{1}{e}$.
		
		\item[\rm{(2)}]Let $f(z)=z+e^{\beta z}$, $b=2z+1$, where $\beta$  is a constant such that $\frac{\beta}{e^\beta}=2$. Then $f'(z)-b(z)=\beta e^{\beta z}-2z$ and $f(z+1)-b(z)=e^{\beta}e^{\beta z}-z$. Thus, $\frac{f'(z)-b(z)}{f(z+1)-b(z)}=2$
        \item[\rm{(3)}]Let $f(z)=e^{2z}$, then $\frac{f'(z)}{f(z+1)}=\frac{2}{e^2}$.
		\item[\rm{(4)}]Let $f(z)=e^{\beta z}$, where $\frac{\beta}{e^{\beta}}=1$. And  let $b(z)\not\equiv0$ be an arbitrary small entire function of $f$.  Then $f'(z)=\beta e^{\beta z}$ and $f(z+1)=e^{\beta}e^{\beta z}$. Thus, $\frac{f'(z)-b(z)}{f(z+1)-b(z)}=1$
	\end{enumerate}

\end{exa}

\begin{proof}[Proof of Theorem \ref{th3}]
Since $f-a$ has finitely many zeros,  we can assume \(f = a + p e^{h}\), where \(p\) is a is a polynomial and \(h\) is an entire function with $\rho(h)<1$.  Then $T(r,f)+S(r,f)=T(r,e^h)$ and $\rho(f)=\mu(f)$.
It is easy to see that $p,b$ and $a$ are small functions of $e^h$.  Set \(p(z)=a_kz^{k}+a_{k - 1}z^{k}+\cdots+a_{0}\), where \(k~(\ge0)\) is an integer, $a_i~ (i=0,1,\dots,k)$ are constants such that $a_k\not=0$.

Noting that \(f'(z)\) and \(f(z+1)\) share \(b\) CM, then we get
\begin{align}
\label{4.1}
    \frac{f'(z) - b}{f(z+1)- b} = e^{Q},
\end{align}
where \(Q\) is an entire function.  Subsisting \(f = a + p e^{h}\) into \eqref{4.1}, then
\begin{align}
\label{4.2}
\frac{(p'+ph')e^h+a'-b}{p(z+1)e^{h(z+1)}+a(z+1)-b}=e^Q.
\end{align}
Let $w_1=p'+ph'$ and $w_2=p(z+1)e^{h(z+1)-h(z)}$, then $w_1$ and $w_2$ are small functions of $e^h$. We rewrite \eqref{4.2} as following
\begin{align}
\label{4.3}
\frac{(e^h+\frac{a'-b}{w_1})w_1}{(e^h+\frac{a(z+1)-b}{w_2})w_2}=e^Q.
\end{align}
Next, we consider two cases.

 \setcounter{case}{0}
\begin{case}
\rm{ $a(z+1)\equiv b$.
	
	 From \eqref{4.3}, we see that the zero of $e^h+\frac{a'-b}{w_1}$ must be the zero of $w_2$. Thus, $a'=b$. Otherwise, by the Nevanlinna  second main theorem, we can get $T(r,e^h)<N(r,\frac{1}{e^h+\frac{a'-b}{w_1}})+S(r,e^h)\le  N(r,\frac{1}{w_2})+S(r,e^h)\le S(r,e^h)$, which is a contradiction. Therefore, $a'=b=a(z+1)$.

By $a(z+1)\equiv b$, $a'=b$  and  \eqref{4.3}, we have $\frac{w_1}{w_2}=e^Q$. That is $p'+ph'=e^{Q+h(z+1)-h}p(z+1)$. Thus, $Q+h(z+1)-h$  is a constant from $\rho(h)<1$ and $p$ is a polynomial. Now, we can deduce $h'=\frac{e^{Q+h(z+1)-h}p(z+1)-p'}{p}$ is a  polynomial. As $z$ tends to infinity, $h'$ becomes a constant, which implies $\deg (h)\le 1$. Without loss of generality, we can assume that $h =\beta z$, where $\beta$ is a constant. Subsisting it into
$p'+ph'=e^{Q+h(z+1)-h}p(z+1)$, then $p'+p\beta=e^Qe^{\beta}p(z+1)$. This means $e^Q=q$ is a constant  by $p$ is a polynomial. Then, comparing the coefficient of \(z^{k}\),  \(z^{k-1}\) and  \(z^{k-2}\) of both sides of  $p'+p\beta=qe^{\beta}p(z+1)$, we get
$e^{\beta }q=\beta$, $ka_k+\beta a_{k-1}=(ka_k+a_{k-1})\beta$ and $(k-1)a_{k-1}+\beta a_{k-2}=(a_k\frac{k(k-1)}{2}+a_{k-1}(k-1)+a_{k-2})\beta$. Thus, $e^{\beta}q=\beta$; if $k\ge 1$, then $k=1$, $q=\frac{1}{e}$ and $\beta=1$.

In this scenario, we conclude that
\[
f(z) =a(z)+ p(z)e^{\beta z}, \frac{f'(z)-b(z)}{f(z+1)-b(z)}=\frac{\beta}{e^{\beta}},  \quad \text{and} \quad \deg(p)=k \leq 1.
\]
Furthermore,
  if \( k = 1 \),  then \( \beta = 1 \).


\begin{case}\rm{
$a(z+1)\not\equiv b$.

 By employing the same methods as in Subcase 2.2 of the proof of Lemma \ref{lm5},  we can get $\frac{a(z+1)-b}{w_2}=\frac{a'-b}{w_1}$ and  $\frac{a'-b}{a(z+1)-b}=e^Q$. Thus, $(e^Q-1)(a(z+1)-b)=a'-a(z+1)$. Subsisting $\frac{a(z+1)-b}{w_2}=\frac{a'-b}{w_1}$ into \eqref{4.3}, then $p'+ph'=p(z+1)e^{Q+h(z+1)-h}$.  By employing the same methods as in Case 1,  we conclude that
$f(z) =p(z)e^{\beta z}+a(z),$ $ \frac{f'(z)-b}{f(z+1)-b}=\frac{\beta}{e^{\beta}}=\frac{a'-b}{a(z+1)-b},$ $ \text{and}  \deg(p) \leq 1.$
Furthermore,
if \( k = 1 \), then \( \beta = 1 \). Thus, Theorem \ref{th3}-(ii) is proved.


 The proof is completed.

}
\end{case}

}
\end{case}

\end{proof}

 \section{Application}\label{sec3}
Another special topic widely studied in the uniqueness theory is the case when two meromorphic functions $f,g$ share a set.
Given a non-constant meromorphic function $f$ in the complex plane, let $S$ be a set of meromorphic functions. We then define
$
E(S,f):=\bigcup_{a\in S}\{z:f(z)-a=0\},
$
\noindent counting multiplicities, i.e., each zero of multiplicity
$m$ will be counted $m$ times into the set $E(S,f)$. We now say
that two meromorphic functions $f,g$ share a set $S$, counting
multiplicities, provided that $E(S,f)=E(S,g)$. The first
uniqueness results for meromorphic functions making use of this
notion of sharing a set were made, to our knowledge, by
Gross \cite{{Gross}}. For some  developments in this area, see \cite[ pp. 194-199.]{Hu}.

In this section, we apply the results from Sections \ref{sec2} and \ref{sec4} to study two relationships:
(i) between an entire function \( f \) and \( \Delta^n f \), and
(ii) between \( f' \) and \( f(z + c) \),
provided that in each case the pair shares a finite set.

\subsection{\texorpdfstring{$f$ and $\Delta^n f$ share a finite set}{f and Delta^n f share a finite set}}

In this subsection, we will apply Theorem \ref{th1.1} to research the relationship between $f$ and $\Delta^n f$, under the condition that $f$ and $\Delta^n f$ ($\Delta^n f \not\equiv 0$) share a finite set.
For this purpose, we briefly review prior work on this topic.
Liu \cite{Liu}  paid attention to ${f}$ and its shifts sharing a finite set and derived the following result.

\vspace{6pt}

\begin{thmx}\cite{Liu}
\label{thd}
Suppose that \(a\) is a non-zero complex number, \(f\) is a transcendental entire function with finite order. If \(f\) and \(\Delta_{c}f\) share \(\{a, -a\}\) CM, then \(\Delta_{c}f(z)=f(z)\) for all \(z\in\mathbb{C}\).
\end{thmx}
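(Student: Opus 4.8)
The plan is to first translate the set-sharing hypothesis into a functional identity. Since $f$ and $\Delta_c f$ are entire of finite order and share $\{a,-a\}$ CM, the zeros of $f^2-a^2$ and of $(\Delta_c f)^2-a^2$ coincide with multiplicities, so the quotient $\frac{(\Delta_c f)^2-a^2}{f^2-a^2}$ is a zero-free entire function of order at most $\rho(f)$; hence there is a polynomial $P$ with $\deg P\le\rho(f)$ such that
\[
(\Delta_c f)^2-a^2=e^{P}\bigl(f^2-a^2\bigr).
\]
Subtracting $f^2-a^2$ and using $\Delta_c f+f=f(z+c)$ gives the equivalent form
\[
\bigl(f(z+c)-2f(z)\bigr)f(z+c)=(e^{P}-1)\bigl(f(z)^2-a^2\bigr).
\]
The whole theorem reduces to proving $e^{P}\equiv 1$: then $(\Delta_c f)^2\equiv f^2$, so $\Delta_c f-f$ or $\Delta_c f+f$ is identically zero, and $\Delta_c f+f\equiv 0$ would give $f(z+c)\equiv 0$, impossible for transcendental $f$, so $\Delta_c f\equiv f$. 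The two easy degenerate possibilities are eliminated first: if $\Delta_c f$ were a constant, then $f^2-a^2$ would be zero-free (or $f^2\equiv a^2$), forcing $f$ to omit both $a$ and $-a$ (or to be constant), contradicting Picard's theorem and transcendence; and if $e^{P}\equiv c_0$ with $c_0\notin\{0,1\}$, then $(\Delta_c f-\sqrt{c_0}\,f)(\Delta_c f+\sqrt{c_0}\,f)$ is a nonzero constant, so each factor is a zero-free entire function of finite order, hence an exponential of a polynomial; applying Lemma \ref{lm1} one finds this polynomial has degree $\le 1$, and comparing coefficients in $\Delta_c f=f(z+c)-f(z)$ forces $c_0=0$, a contradiction.

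To prove $e^{P}\equiv 1$, I would argue by contradiction, assuming $\deg P\ge 1$. Every zero $z_0$ of $f^2-a^2$ has $\Delta_c f(z_0)\in\{a,-a\}$ by the sharing, so it is either \emph{non-switched}, with $\Delta_c f(z_0)=f(z_0)$ --- then by the second displayed identity $z_0$ is a zero of $\Delta_c f-f=f(z+c)-2f(z)$ --- or \emph{switched}, with $\Delta_c f(z_0)=-f(z_0)$, in which case $f(z_0+c)=f(z_0)+\Delta_c f(z_0)=0$. Consequently, using the shift invariance of counting functions for finite-order functions \cite{Chiang,Halburd},
\[
\overline N\Bigl(r,\tfrac{1}{f^2-a^2}\Bigr)\le\overline N\Bigl(r,\tfrac{1}{\Delta_c f-f}\Bigr)+N\Bigl(r,\tfrac{1}{f}\Bigr)+S(r,f).
\]
On the other hand, the second main theorem \cite{yang2003} applied to $f$ and to $\Delta_c f$ with the three values $a,-a,\infty$, together with $\overline N(r,\frac{1}{f-a})+\overline N(r,\frac{1}{f+a})=\overline N(r,\frac{1}{f^2-a^2})$ and $\overline N(r,f)=\overline N(r,\Delta_c f)=0$, gives $T(r,f)\le\overline N(r,\frac{1}{f^2-a^2})+S(r,f)$ and the same bound for $T(r,\Delta_c f)$. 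Moreover $\Delta_c f$ cannot be a small function of $f$: if $T(r,\Delta_c f)=o(T(r,f))$, then $f^2-a^2=e^{-P}\bigl((\Delta_c f)^2-a^2\bigr)$ would have only $o(T(r,f))$ zeros, contradicting $T(r,f)\le\overline N(r,\frac{1}{f^2-a^2})+S(r,f)$, unless $(\Delta_c f)^2\equiv a^2$, which forces $f^2\equiv a^2$. Feeding these estimates into each other --- combining the divisor information coming from $\bigl(f(z+c)-2f(z)\bigr)f(z+c)=(e^{P}-1)(f^2-a^2)$ (so that the zeros of $e^{P}-1$ lying outside the shifted zero set of $f$ are zeros of $f^2-a^2$, while $N(r,\frac{1}{e^P-1})=T(r,e^P)+S(r,f)$ grows like $r^{\deg P}$) with the difference analogue of the logarithmic derivative lemma \cite{Halburd1}, which yields $m(r,\frac{\Delta_c f}{f})=S(r,f)$ and $T(r,\Delta_c f)\le T(r,f)+S(r,f)$ --- one is led to a growth inequality incompatible with $\deg P\ge 1$. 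Hence $e^{P}$ is constant, and by the previous paragraph $e^{P}\equiv 1$ and $\Delta_c f\equiv f$.

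The delicate point is this last growth estimate. A priori one only knows $\deg P\le\rho(f)$, so $e^{P}$ need not be a small function of $f$, and the second main theorem applied directly to $f$ or to $\Delta_c f$ produces only the weak chain $T(r,f)\lesssim\overline N(r,\frac{1}{f^2-a^2})\lesssim 2T(r,f)$, which does not suffice. The contradiction must be extracted by a simultaneous accounting of the zeros of $f$, of $f^2-a^2$, of $\Delta_c f-f=f(z+c)-2f(z)$ and of $e^{P}-1$, crucially using that the switched zeros of $f^2-a^2$ are controlled by the zeros of the shift $f(z+c)$ --- hence by $N(r,\frac{1}{f})$ up to $S(r,f)$ --- and that $z\mapsto z+c$ leaves the Nevanlinna characteristic of a finite-order function essentially unchanged. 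Once $e^{P}$ is pinned down to a constant, everything else is elementary.
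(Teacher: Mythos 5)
This statement is Theorem~D, which the paper quotes from Liu's 2009 article without giving a proof, so there is no in-paper argument to compare against; your proposal has to stand on its own, and as written it does not. The setup is fine: CM sharing of $\{a,-a\}$ by entire functions of finite order does give $(\Delta_c f)^2-a^2=e^{P}(f^2-a^2)$ with $P$ a polynomial, the identity $\bigl(f(z+c)-2f(z)\bigr)f(z+c)=(e^{P}-1)(f^2-a^2)$ is correct, the reduction of the whole theorem to $e^{P}\equiv 1$ is correct, and your elimination of a constant $e^{P}=c_0\neq 1$ (writing the two zero-free factors as exponentials, using the Borel lemma to force degree at most $1$, and arriving at $1=(1-\sqrt{c_0})(1+\sqrt{c_0})$) does check out. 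The problem is that the entire content of the theorem sits in the one step you do not carry out: ruling out $\deg P\ge 1$. You list ingredients and then assert that ``one is led to a growth inequality incompatible with $\deg P\ge 1$,'' and you yourself flag this as the delicate point. The estimates you actually state do not combine into a contradiction: the second main theorem gives $T(r,f)\le\overline N\bigl(r,\frac{1}{f^2-a^2}\bigr)+S(r,f)$, and your switched/non-switched decomposition gives $\overline N\bigl(r,\frac{1}{f^2-a^2}\bigr)\le N\bigl(r,\frac{1}{\Delta_c f-f}\bigr)+N\bigl(r,\frac{1}{f}\bigr)+S(r,f)$; but $N\bigl(r,\frac{1}{\Delta_c f-f}\bigr)$ and $N\bigl(r,\frac{1}{f}\bigr)$ can each be comparable to $T(r,f)$ or larger, and every route through these inequalities returns only bounds of the form $T(r,e^{P})\le T(r,f)+S(r,f)$, which is perfectly consistent with $1\le\deg P\le\rho(f)$.

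Moreover, the one concrete new piece of divisor information you propose for this step is wrong: from $\bigl(f(z+c)-2f(z)\bigr)f(z+c)=(e^{P}-1)(f^2-a^2)$, a zero $z_1$ of $e^{P}-1$ with $f(z_1+c)\neq 0$ satisfies $f(z_1+c)=2f(z_1)$, i.e.\ it is a zero of $\Delta_c f-f$, not of $f^2-a^2$ as you claim. Correcting this gives $\overline N\bigl(r,\frac{1}{e^{P}-1}\bigr)\le \overline N\bigl(r,\frac{1}{\Delta_c f-f}\bigr)+N\bigl(r,\frac{1}{f}\bigr)+S(r,f)$, which, fed into the second main theorem for $e^{P}$, again produces nothing sharper than $T(r,e^{P})\le 2T(r,f)+S(r,f)$. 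So the proposal has a genuine gap at its central step; to close it you need an argument that actually forces $T(r,e^{P})=S(r,f)$ --- for instance a sharper simultaneous accounting of the zeros of $f$, of $\Delta_c f-f$ and of $f^2-a^2$ that exploits the CM (not merely IM) multiplicity matching, or the auxiliary value-distribution lemma Liu uses in the cited source --- rather than the collection of inequalities you have assembled.
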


\vspace{6pt}

In the same paper, Liu posed the following question

\vspace{6pt}
 \noindent{\bf Liu's Question} \cite[Remark 2.5]{Liu} : Let \(f\) be a transcendental entire function with finite order. And Let \(a\) and \(b\) be two small functions of \(f\) with period \(c\) such that $f$ and  \(\Delta_{c}f\) share the set \(\{a, b\}\) CM. Then, what can we say about the relationship between \(f\) and \(\Delta_{c}f\) ?

 \vspace{6pt}
  For this question, Li \cite{Li1} et al. proved the following theorem.

\vspace{6pt}

\begin{thmx}\cite{Li1}
\label{the}
Suppose that \(a\), \(b\) are two distinct entire functions, and \(f\) is a non-constant entire function with \(\rho(f)\neq1\) and \(\lambda(f)<\rho(f)<\infty\) such that \(\rho(a)<\rho(f)\) and \(\rho(b)<\rho(f)\). If \(f\) and \(\Delta_{c}f\) share \(\{a, b\}\) CM, then \(\Delta_{c}f(z)=f(z)\) for all \(z\in\mathbb{C}\).
\end{thmx}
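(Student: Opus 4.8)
The plan is to convert the set‑sharing hypothesis into a single exponential identity, exploit the Hadamard factorisation forced by $\lambda(f)<\rho(f)$, and then pin down the resulting unknown exponential by a growth argument. First, since $f$ and $\Delta_c f$ share $\{a,b\}$ CM, the entire functions $(f-a)(f-b)$ and $(\Delta_c f-a)(\Delta_c f-b)$ have exactly the same zeros with the same multiplicities, so
\[
(\Delta_c f-a)(\Delta_c f-b)=e^{\alpha}(f-a)(f-b)
\]
for some entire $\alpha$; comparing orders (both sides have order $\le d:=\rho(f)<\infty$, and $(f-a)(f-b)$ has order exactly $d$ since $\rho(a),\rho(b)<d$) shows $\alpha$ is a polynomial with $\deg\alpha\le d$. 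By $\lambda(f)<\rho(f)<\infty$ and Hadamard's theorem, $d\in\mathbb N$ and $f=ue^{v}$ with $v$ a polynomial of degree $d$ and $\rho(u)=\lambda(f)<d$; since $d\ne1$ we have $d\ge2$, hence $w:=v(z+c)-v(z)$ is a polynomial of degree $d-1\ge1$ and $\Delta_c f=e^{v}\bigl(u(z+c)e^{w}-u\bigr)=:Ve^{v}$ with $\rho(V)<d$. I would then check $V\not\equiv0$: otherwise $f$ is $c$‑periodic, so $(\Delta_c f-a)(\Delta_c f-b)=ab$ and every zero of $(f-a)(f-b)$ is a zero of $ab$; the second main theorem then gives $2T(r,f)\le N(r,1/(f-a))+N(r,1/(f-b))+S(r,f)\le S(r,f)$, impossible. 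Thus $\psi:=\Delta_c f/f=V/u$ has order $<d$, i.e. $\psi$ is a small function of $f$, and substituting $\Delta_c f=\psi f$ turns the identity above into
\[
(\psi^{2}-e^{\alpha})f^{2}-(a+b)(\psi-e^{\alpha})f+ab(1-e^{\alpha})=0 .
\]

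Next I would split on whether $e^{\alpha}\equiv\psi^{2}$. If it is, this factors as $(1-\psi)\bigl(ab(1+\psi)-(a+b)\psi f\bigr)=0$; since $\psi\not\equiv0$, the second factor cannot vanish identically without forcing $f$ to equal a small function or $f(z+c)\equiv0$, both absurd, so $\psi\equiv1$, that is $\Delta_c f\equiv f$, the assertion. If $e^{\alpha}\not\equiv\psi^{2}$, the quadratic in $f$ has non‑vanishing leading coefficient, so solving it expresses $f$ rationally in $e^{\alpha},\psi,a,b$ and gives $T(r,f)=O\bigl(T(r,e^{\alpha})\bigr)+S(r,f)$; hence $d=\rho(f)\le\deg\alpha\le d$, so $\deg\alpha=d$, and it remains to rule this out.

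To do so, I would substitute $f=ue^{v}$ into the quadratic and clear parentheses, obtaining the six‑term identity
\[
V^{2}e^{2v}-u^{2}e^{\alpha+2v}-(a+b)Ve^{v}+(a+b)ue^{\alpha+v}+ab-abe^{\alpha}=0 ,
\]
all of whose coefficients $V^{2},-u^{2},-(a+b)V,(a+b)u,ab,-ab$ have order $<d=\deg v$. When the leading coefficient of $\alpha$ is none of $\pm\lambda,\pm2\lambda$ (with $\lambda$ the leading coefficient of $v$), the six exponents $2v,\alpha+2v,v,\alpha+v,0,\alpha$ have pairwise distinct degree‑$d$ leading terms, so Lemma~\ref{lm1} forces every coefficient to vanish — in particular $u\equiv0$, absurd. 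The remaining ``resonant'' cases I would dispatch uniformly by an angular growth comparison: in any sector where $\operatorname{Re}v(z)\to+\infty$ one has $|f|,|\Delta_c f|\to\infty$ (both equal $e^{v}$ times a function of order $<d$), so $f-a,f-b,\Delta_c f-a,\Delta_c f-b$ are zero‑free there for large $|z|$, and logarithmic differentiation of the first identity gives, off a set of finite measure, $\alpha'=2\psi'/\psi+o(r^{d-1})$ in that sector (the four $v'$‑contributions cancel); but $\alpha'$ is a nonzero polynomial of degree $d-1$, so $|\alpha'(z)|\asymp r^{d-1}$, while $\rho(\psi)<d$ gives $|\psi'(z)/\psi(z)|=o(r^{d-1})$ off a thin set — a contradiction. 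Hence $\deg\alpha\ne d$, so $e^{\alpha}\equiv\psi^{2}$, and by the previous paragraph $\Delta_c f\equiv f$.

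The main obstacle is exactly this last step — firmly controlling $e^{\alpha}$, i.e. excluding $\deg\alpha=\rho(f)$. The Borel‑type Lemma~\ref{lm1} only settles the generic sub‑cases, since in the resonant ones the surviving exponent differences can have small degree while the small‑function coefficients need not have order below those degrees; the angular‑growth argument bypasses this but requires care to make the sectorial estimates uniform off the exceptional discs. Note that $\rho(f)\ne1$ is used precisely here: it forces $\deg w=d-1\ge1$, which is what makes $V\not\equiv0$ and $\psi=\Delta_c f/f$ a genuine small function; when $\rho(f)=1$ the whole scheme degenerates, consistent with the Br\"uck‑type phenomena of Sections~\ref{sec2}--\ref{sec4}.
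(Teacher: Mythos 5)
First, note that the paper does not actually prove Theorem~\ref{the}: it is quoted from \cite{Li1} as background, and the nearest argument in the paper is the proof of its generalization, Theorem~\ref{th2.1}. Measured against that, your route is genuinely different. The paper writes the sharing identity as a ratio of factors of the form $e^{\beta}+(\text{small})$, applies the second main theorem to $e^{\beta}$ to force two of the small ``shifted targets'' to coincide, concludes that $\Delta_c f$ and $f$ share a single function CM with ratio $e^{\alpha/2}$, and then finishes with the Br\"uck-type Theorem~\ref{th1.1}. You instead expand the identity into a quadratic in $f$ with coefficients built from $e^{\alpha}$ and small functions, split on whether $e^{\alpha}\equiv\psi^{2}$, and exclude the remaining case by first pinning $\deg\alpha=\rho(f)$ and then combining Lemma~\ref{lm1} with a pointwise logarithmic-derivative growth comparison. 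Your analysis of the case $e^{\alpha}\equiv\psi^{2}$, the proof that $V\not\equiv0$, and the Valiron--Mohon'ko step forcing $\deg\alpha=d$ are all correct. The paper's common-zero argument is shorter and avoids the resonance bookkeeping entirely: applied to your factored identity it shows directly that $a/u$ (or $b/u$) must coincide with $a/V$ or $b/V$, whence $V=u$ or $V=-u$ with $b=-a$, and the minus sign forces $f(z+c)\equiv0$. Incidentally, your closing remark misplaces the role of $\rho(f)\neq1$: your own proof of $V\not\equiv0$ uses the second main theorem, not $\deg w\geq1$, and $\psi$ is small whenever $\deg w<d$, which holds for $d=1$ as well (consistent with Qi et al.\ later removing that hypothesis).

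The one step that fails as written is the assertion that $f-a$, $f-b$, $\Delta_cf-a$, $\Delta_cf-b$ are zero-free for large $|z|$ in a sector where $\mathrm{Re}\,v\to+\infty$. Since $|f|=|u|\,e^{\mathrm{Re}\,v}$ and $u$ may have infinitely many zeros in that sector, $|f|$ does not tend to infinity there, and $f-a$ can vanish near every zero of $u$ lying in the sector. The estimate you actually need, $\frac{(f-a)'}{f-a}=\frac{u'}{u}+v'+o(1)$, is still obtainable, but only after excising small discs around the zeros of $u$ and $V$ and invoking a minimum-modulus bound of the form $\log|u(z)|>-r^{\rho(u)+\varepsilon}$ outside an $R$-set to control the error term $\frac{f'a-a'f}{f(f-a)}=e^{-v}\cdot\frac{(u'+uv')a-a'u}{u(u-ae^{-v})}$; one must then also verify that admissible points survive in the sector off the union of all exceptional sets. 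You flag this yourself, and the repair is standard, but as stated the sectorial step is an assertion rather than a proof; with that repair (which in fact makes the Lemma~\ref{lm1} case distinction redundant, since the angular argument kills every instance of $\deg\alpha=d$), the argument goes through.
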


\vspace{6pt}

 Qi et al. \cite{Qi} showed that Theorem \ref{the} still holds without the condition \(\rho(f)\neq1\).
 Guo et al. \cite{Guo}  generalized the first difference operator \( \Delta_c f \)  to the \( n \)th difference operator \( \Delta_c^n f \) in Qi's result  \cite[P.2, Main result]{Qi}.
 \vspace{6pt}

\begin{thmx}\cite{Guo}
\label{thg}
    Suppose that \(a, b\) are two distinct entire functions, and \(f\) is an entire function of hyper-order strictly less than 1 such that \(\lambda(f) < \rho(f)\), \(\rho(a) < \rho(f)\) and \(\rho(b) < \rho(f)\). If \(f\) and \(\Delta^n f(z) (\not\equiv 0)\) share the set \(\{a, b\}\) CM, then \(f(z) = A e^{\lambda z}\), where \(A, \lambda\) are two non-zero constants with \((e^{\lambda} - 1)^n = \pm 1\). Furthermore,
\begin{enumerate}
    \item[\rm{(ii)}] if \((e^{\lambda } - 1)^n = 1\), then \(\Delta^n f(z) = f(z)\);
   \item[\rm{(ii)}] if \((e^{\lambda } - 1)^n = -1\), then \(\Delta^n f(z) = -f(z)\) and \(b = -a\).
\end{enumerate}
\end{thmx}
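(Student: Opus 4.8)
The plan is to recast the set-sharing hypothesis as a single functional equation, use it together with the second main theorem to determine the quotient $\Delta^{n}f/f$, and then invoke a difference-Br\"uck result to obtain the exponential form of $f$. Write $g=\Delta^{n}f\not\equiv0$. Since $f$ and $g$ share $\{a,b\}$ CM, the entire functions $(f-a)(f-b)$ and $(g-a)(g-b)$ have precisely the same zeros with the same multiplicities, so
\[
(g-a)(g-b)=e^{P}(f-a)(f-b)
\]
for some entire $P$. Arguing as in Remark \ref{r1}, $\lambda(f)<\rho(f)$ forces $\rho(f)\ge1$ and, when $\rho(f)<\infty$, $\mu(f)=\rho(f)$; in every case $a$, $b$ and all polynomial combinations of their shifts are small functions of $f$, and $N(r,1/f)=S(r,f)$. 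Because $\rho_{2}(f)<1$, the difference analogue of the logarithmic derivative lemma gives $m(r,f(z+j)/f(z))=S(r,f)$, hence $m(r,\Delta^{n}f/f)=S(r,f)$; together with $N(r,1/f)=S(r,f)$ this shows that $\varphi:=\Delta^{n}f/f$ is a small (and nonzero) meromorphic function of $f$, and in particular $T(r,\Delta^{n}f)=T(r,f)+S(r,f)$.

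Next I would locate the $a$- and $b$-points of $f$. Outside an $S(r,f)$-exceptional set, a zero $z_{0}$ of $f-a$ satisfies $\Delta^{n}f(z_{0})=\varphi(z_{0})a(z_{0})$, and CM set-sharing forces $\Delta^{n}f(z_{0})\in\{a(z_{0}),b(z_{0})\}$, so $\varphi(z_{0})=1$ or $\varphi(z_{0})=b(z_{0})/a(z_{0})$. Hence $\overline{N}(r,1/(f-a))\le N(r,1/(\varphi-1))+N(r,1/(\varphi-b/a))+S(r,f)$, which is $S(r,f)$ unless $\varphi\equiv1$ or $\varphi\equiv b/a$; symmetrically $\overline{N}(r,1/(f-b))=S(r,f)$ unless $\varphi\equiv1$ or $\varphi\equiv a/b$. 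Since $f$ is entire, the second main theorem for the two small targets $a,b$ gives $T(r,f)\le\overline{N}(r,1/(f-a))+\overline{N}(r,1/(f-b))+S(r,f)$, so not both counting functions can be $S(r,f)$. Therefore $\varphi\equiv1$, or $\varphi\equiv b/a$, or $\varphi\equiv a/b$.

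Finally I would handle the three possibilities. If $\varphi\equiv1$ then $\Delta^{n}f=f$ directly. If $\varphi\equiv b/a$ (the case $\varphi\equiv a/b$ is symmetric), I first show $b/a$ is constant: write $f=Ge^{h}$ with $G$ entire, $\rho(G)=\lambda(f)<\rho(f)$, and — using $\rho_{2}(f)<1$ — $h$ either a polynomial or transcendental of order $<1$; substituting into $a\,\Delta^{n}f=b\,f$ gives $a\sum_{i=1}^{n}(-1)^{n-i}\binom{n}{i}G(z+i)e^{h(z+i)-h(z)}=\bigl(b-(-1)^{n}a\bigr)G$, and the Borel-type Lemma \ref{lm1} forces some $h(z+i)-h(z+j)$ $(i\neq j)$ to be constant, whence $h$ reduces to a degree-one polynomial and $G$ to a constant, so $\varphi=b/a$ is a constant $c$. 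In all cases $\Delta^{n}f=cf$ with $c\neq0$ constant, so $f$ and $\Delta^{n}f$ share $0$ CM, and by \cite[Theorem 11.4.2]{Chen1} we get $f(z)=Ae^{\lambda z}$ with $A,\lambda$ nonzero constants. Then $\Delta^{n}f=(e^{\lambda}-1)^{n}f$, so $c=(e^{\lambda}-1)^{n}$; substituting $f=Ae^{\lambda z}$ and $\Delta^{n}f=cf$ into $(g-a)(g-b)=e^{P}(f-a)(f-b)$, cancelling the nonvanishing factor $f-a$, and examining the zero structure of the resulting identity forces $e^{P}\equiv c^{2}$ and $c(1-c^{2})a\equiv0$, hence $c^{2}=1$. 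Since $c=1$ would give $b=a$ when $\varphi\equiv b/a$ or $\varphi\equiv a/b$, we are left with $\varphi\equiv1$, $(e^{\lambda}-1)^{n}=1$ and $\Delta^{n}f=f$ (case (i)), or $c=-1$, $(e^{\lambda}-1)^{n}=-1$, $\Delta^{n}f=-f$ and $b=ca=-a$ (case (ii)).

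The main obstacle is the step showing that $h$ is linear and $G$ constant: one must control the interaction between the order of $f$ — which a priori may be large, non-integer, or even infinite — and the orders of $a$, $b$, $G$ when checking the growth hypothesis of Lemma \ref{lm1} for the exponentials $e^{h(z+i)-h(z+j)}$, and this requires separating the subcases $\rho(f)=1$, $\rho(f)>1$ with $h$ a polynomial, and $\rho(f)=\infty$ with $h$ transcendental. It is exactly here that both hypotheses $\lambda(f)<\rho(f)$ and $\rho_{2}(f)<1$ are used in an essential way.
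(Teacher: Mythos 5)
First, note that the paper never proves Theorem \ref{thg} directly: it is quoted from \cite{Guo}, and the paper instead proves the generalization Theorem \ref{th2.1} (Theorem \ref{thg} is the special case $c=0$). Your architecture is close in spirit to that proof but genuinely different in execution. The paper factorizes $f=he^{\beta}$, sets $\omega=\Delta^{n}f/e^{\beta}$, and applies the second main theorem to $e^{\beta}$ to force either $\frac{-a}{h}=\frac{-a}{\omega}$ or the pair of identities $\frac{-a}{h}=\frac{-b}{\omega}$ and $\frac{-b}{h}=\frac{-a}{\omega}$, from which $\omega/h\in\{1,-1\}$ drops out immediately; it then feeds the resulting statement ``$\Delta^{n}f$ and $f$ share $0$ CM'' into its own Theorem \ref{th1.1}. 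You instead work with $\varphi=\Delta^{n}f/f$, prove it is small via the difference logarithmic derivative lemma together with $N(r,1/f)=S(r,f)$, and pigeonhole the $a$- and $b$-points to reach the trichotomy $\varphi\equiv1$, $\varphi\equiv b/a$, $\varphi\equiv a/b$; both routes end at \cite[Theorem 11.4.2]{Chen1}. One technical remark: the truncated second main theorem for two small targets is Yamanoi's deep theorem, but you can avoid invoking it here because multiple zeros of $f-a=G(e^{h}-a/G)$ are zeros of the small function $h'a/G-(a/G)'$, so $N(r,1/(f-a))=\overline{N}(r,1/(f-a))+S(r,f)$ and the classical three-small-functions theorem suffices.

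The one genuine gap is exactly the step you flag: in the case $\varphi\equiv b/a$ you must show $b/a$ is constant before \cite[Theorem 11.4.2]{Chen1} applies, and your proposed use of Lemma \ref{lm1} on $a\sum_{i=1}^{n}(-1)^{n-i}\binom{n}{i}G(z+i)e^{h(z+i)-h(z)}=\bigl(b-(-1)^{n}a\bigr)G$ founders on hypothesis (iii): when $\deg h\ge2$ (or $h$ is transcendental of order less than $1$) the exponentials $e^{h(z+i)-h(z+j)}$ grow much more slowly than $e^{h}$, while $G,a,b$ are only known to be small relative to $e^{h}$, so $T(r,G)=o\bigl(T(r,e^{h(z+i)-h(z+j)})\bigr)$ is not available. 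You do not need this step: push your own $a$-point/$b$-point dichotomy one notch further. If $\varphi\equiv b/a$ with $a^{2}\not\equiv b^{2}$, then at a $b$-point $z_{1}$ of $f$ you get $\Delta^{n}f(z_{1})=b^{2}(z_{1})/a(z_{1})\in\{a(z_{1}),b(z_{1})\}$, impossible off a small set, so $\overline{N}(r,1/(f-b))=S(r,f)$; combined with $N(r,1/f)=S(r,f)$ and the second main theorem for the targets $0$, $b$, $\infty$ this forces $b\equiv0$, hence $\varphi\equiv0$ and $\Delta^{n}f\equiv0$, a contradiction. Therefore $b\equiv-a$ and $\varphi\equiv-1$ is automatically constant --- which is precisely how the two simultaneous identities in Case 2 of the paper's proof of Theorem \ref{th2.1} settle the matter. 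With that replacement (and the symmetric argument for $\varphi\equiv a/b$) your proof closes.
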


If $\lambda(f-c)<\rho(f)$, then the meromorphic function $c$ is called a Borel exceptional function (value) of $f$. In Theorem \ref{thg}, $0$ is the Borel exceptional value of $f$; therefore, it is natural to consider the case where $f$ has a nonzero Borel exceptional function.
By using Theorem \ref{th1.1}, we   obtaine the following results.

\vspace{4pt}

\begin{thm}
\label{th2.1}
    Suppose that  \(f\) is an entire function of hyper-order strictly less than 1 and  $a, b, c$ are mall entire functions of $f$ such that $c$ is nonzero, \(\lambda(f-c) < \rho(f)\), $\rho(c)<\rho(f)$ and $a\not\equiv b$. If \(f\) and \(\Delta^n f(z) (\not\equiv 0)\) share the set \(\{a, b\}\) CM,
    then $f(z)=c(z)+pe^{\gamma z}$, where $p,\gamma$ are non-zero constants. What's  more, $(e^{\gamma}-1)^n=-1$, $\Delta^n(f-c)=-(f-c)$  and $\Delta^nc+c=a+b$.
\end{thm}

\begin{rem}
\begin{enumerate}
\item[\rm{(1)}]  Compared to Theorem~\ref{thg}, the condition on $a$ and $b$ in our theorem  differs in that we require $a$ and $b$ both to be  small functions  of $f$   without   assuming $\rho(a) < \rho(f)$ and $\rho(b) < \rho(f)$. In fact,  
under the assumptions about $a,b$ of Theorem \ref{thg}, we deduce that $a$ and $b$ are both small entire functions of $f$. In the following, we 
present the proof.

   Because \(f \) has two Borel exceptional value \(0\) and \(\infty\), we obtain that \(f \) is   regular growth with \(\rho(f)=\mu( f)\geq 1\). Therefore,  \(\rho(a) < \rho(f)\) and \(\rho(b) < \rho(f)\) yields that $a,b$ are small functions of $f$. 
		\item[\rm{(2)}] If $a=b$, then Theorem \ref{th1.1} gives the the relationship
between $f$ and $\Delta^nf$.
\item [\rm{(3)}] In comparison with Theorem \ref{thg}, we see that case $\Delta^n f\equiv f$ cannot occur. Otherwise, Theorem \ref{th2.1} would imply that $2f=\Delta^nf+f=\Delta^nc+c$, which is impossible, since $c$ is a small function of $f$.

	\end{enumerate}
\end{rem}

\vspace{6pt}

The following examples are given to  illustrate    the results of  Theorem \ref{th2.1}.
\begin{exa}
\label{e2}
(1) Let $f=z+e^{\gamma z}$, $c=b=z$, $a=0$, and $\gamma=\ln(i+1)$. Then $\Delta^2f=-e^{\gamma z}$ , $\Delta^2f-b=-e^{\gamma z}-z$, therefore \(f\) and \(\Delta^2 f(z)\) share the set \(\{a, b\}\) CM, \(\Delta^2(f-z) = -1(f-z)\).

(2)   Consider \( f(z) = e^{z\ln2} (e^{2\pi i z} + e^{6k\pi iz}) \). Obviously, \( \Delta f(z) = f(z) \). Assume that \( a, b \) are two arbitrary entire functions of order less than 1. Then \( \Delta f(z) \) and \( f(z) \) share the set \( \{a, b\} \) CM.  However, $e^{ln2z}e^{6k\pi iz}$ is a Borel exceptional function of $f$,  and the form of \( f \) does not satisfy the conclusion of Theorem \ref{th2.1}. This  example    shows that the condition $\rho(c)<\rho(f)$ is  necessary.
\end{exa}

\vspace{6pt}

When $n=1$,  then Theorem \ref{th2.1} implies $e^{\gamma}=0$, which is impossible. Thus, under the assumptions of Theorem \ref{th2.1},  the  case \(f\) and \(\Delta^n f(z) (\not\equiv 0)\) share the set \(\{a, b\}\) CM cannot occur. Therefore,  from Theorems \ref{thg} and \ref{th2.1}, we obtain the following corollary, which
         partially  solves  {\bf Liu's question }\cite[Remark 2.5]{Liu}.
         
\begin{cor}
  Suppose that  \(f\) is an entire function of hyper-order strictly less than 1 and  $a, b, c$ are mall entire functions of $f$ such that \(\lambda(f-c) < \rho(f)\), $\rho(c)<\rho(f)$ and $a\not\equiv b$. If \(f\) and \(\Delta f(z) (\not\equiv 0)\) share the set \(\{a, b\}\) CM,
    then $c\equiv0$, and $\Delta f=f$.
\end{cor}

\vspace{6pt}

\begin{proof}[Proof of Theorem \ref{th2.1}]

By the assumption \(\lambda(f-c) < \rho(f)\) and Hadamard factorization theorem, we suppose that \(f(z)=h(z)e^{\beta(z)}+c\), where \(h(z)(\not\equiv 0)\) and \(\beta\) are two entire functions satisfying
$\lambda(f-c)=\rho(h)<\rho(f)=\rho(e^{\beta}),~\rho(\beta)=\rho_2(f)<1.$ Then  $a,b,c, h$ are small functions of $e^{\beta}$ by $T(r,e^\beta)=T(r,f)+S(r,f)$. Since \(f\) and \(\Delta_c^n f\) share the set \(\{a, b\}\) CM,  we get
\begin{align}
\label{3.1}
\frac{(\Delta_c^n f - a)(\Delta_c^n f - b)}{(f - a)(f - b)} = e^{\alpha},
\end{align}
where \(\alpha\) is an entire function.

Substituting  the forms of \(f\) and \(\Delta_c^n f\) into \eqref{3.1} yields that
\begin{align}
\label{3.2}
&\left(\left[\sum_{i = 0}^{n}(-1)^{n - i}\binom{n}{i}h(z + ic)e^{\beta(z + ic)-\beta(z)}\right]e^{\beta(z)}+\Delta^nc-a\right)\\ \nonumber
&\left(\left[\sum_{i = 0}^{n}(-1)^{n - i}\binom{n}{i}h(z + ic)e^{\beta(z + ic)-\beta(z)}\right]e^{\beta(z)}+\Delta^nc-b\right)\\ \nonumber
&= e^{\alpha}(h(z)e^{\beta(z)}+c-a)(h(z)e^{\beta(z)}+c-b).
\end{align}
Since $a\not\equiv b$, we note that \(c - a\) and \(c - b\) are not both zero. Without loss of generality,  we   suppose that $c-a\not\equiv0$. Set \(\omega=\frac{\Delta^nf-\Delta^nc}{e^\beta}=\sum_{i = 0}^{n}(-1)^{n - i}\binom{n}{i}h(z + ic)e^{\beta(z + ic)-\beta(z)}\). 
 
 We claim $\omega\not\equiv0$. Otherwise,
if $\omega\equiv0$, then \eqref{3.2} reduces to
\begin{align*}
\frac{(\Delta^nc-a)(\Delta^nc-b)}{(h(z)e^{\beta(z)}+c-a)(h(z)e^{\beta(z)}+c-b)}=e^\alpha.
\end{align*}
Then the zero of $he^\beta+c-a$ must be the zero  of $(\Delta^nc-a)(\Delta^nc-b)$. However, $(\Delta^nc-a)(\Delta^nc-b)\not\equiv0$. Otherwise, $f\equiv a$ or $f\equiv b$, which is impossible.  By  the  Nevanlinna second main theorem, we get $T(r,he^{\beta})\le N(r,\frac{1}{(\Delta^nc-a)(\Delta^nc-b)})+S(r,e^\beta)\le S(r,e^\beta)$, this is a contradiction.   Hence, $\omega\not\equiv0$. By  difference logarithmic derivative lemma,   we deduce $\omega$ is a small function of $e^\beta$.

We rewrite \eqref{3.2} as following:
\begin{align}
\label{3.3}
e^{\alpha}=\frac{\omega^{2}\left[e^{\beta}+\frac{\Delta^nc-a}{\omega}\right]\left[e^{\beta}+\frac{\Delta^nc-b}{\omega}\right]}{h^{2}\left[e^{\beta}+\frac{c-a}{h}\right]\left[e^{\beta}+\frac{c-b}{h}\right]}.
\end{align}
 Then the zeros of $e^{\beta}+\frac{c-a}{h}$ must be the zeros  of  $\left[e^{\beta}+\frac{\Delta^nc-a}{\omega}\right]\left[e^{\beta}+\frac{\Delta^nc-b}{\omega}\right]$ and $\omega^2$.  Below, we denote by \(N_1\) the  counting function of those common zeros of $e^{\beta}+\frac{c-a}{h}$  and $e^{\beta}+\frac{\Delta^nc-a}{\omega}$. Similarly, denote by \(N_2\) the counting function of those common zeros of $e^{\beta}+\frac{c-a}{h}$  and $e^{\beta}+\frac{\Delta^nc-b}{\omega}$. Note that \(h\) is a small function with respect to \(e^{\beta}\); applying the second fundamental theorem to \(e^{\beta}\) yields that
\begin{align}
\label{3.4}
T(r,e^{\beta})\le N\left(r,\frac{1}{e^{\beta}+\frac{c-a}{h}}\right)+S(r,e^{\beta}) = N_1+N_2+S(r,e^{\beta}),
\end{align}
which implies that either \(N_1 \neq S(r,e^{\beta})\) or \(N_2\neq S(r,e^{\beta})\). Next, we consider these two cases.

 \setcounter{case}{0}
\begin{case}\rm{

\(N_1 \neq S(r,e^{\beta})\).
  
  Let $z_0$ be the
common zero of $e^{\beta}+\frac{c-a}{h}$  and $e^{\beta}+\frac{\Delta^nc-a}{\omega}$, then $z_0$ is the zero of $\frac{c-a}{h}-\frac{\Delta^nc-a}{\omega}$.
Thus, if $\frac{c-a}{h}-\frac{\Delta^nc-a}{\omega}\not\equiv0$, then $N_1\le N(r,\frac{1}{\frac{c-a}{h}-\frac{\Delta^nc-a}{\omega}})\le S(r,e^\beta)$, which is a contradiction to \(N_1 \neq S(r,e^{\beta})\). Therefore, $\frac{c-a}{h}=\frac{\Delta^nc-a}{\omega}$. 
 Substituting it into the equation \eqref{3.3}, then
\begin{align}
\label{4.41}
e^{\alpha}=\frac{\omega^{2}\left[e^{\beta}+\frac{\Delta^nc-b}{\omega}\right]}{h^{2}\left[e^{\beta}+\frac{c-b}{h}\right]}.
\end{align}

If $c\equiv b$, then   the zero of $e^{\beta}+\frac{\Delta^nc-b}{\omega}$
 must be the zero of $h$. 
 By using the  second main theorem to $e^{\beta}$, we get $\Delta^nc=b=c$.
From \eqref{4.41},  we have $\frac{w^2}{h^2}=e^{\alpha}$.  Therefore, $\frac{\Delta^n(f-c)}{f-c}=\frac{\omega}{h}=e^{\frac{\alpha}{2}+k\pi i}$, where $k$ is an integer.
This implies $\Delta^a(f-c)$  share $0$ CM with $f-c$. Since $f-c=he^\beta$ and $\lambda(f-c)<\rho(f-c)$,
by Theorem \ref{th1.1}, we get $f-c =pe^{\gamma z},$   where \( p, \gamma\) are  non-zero constants and $(e^{\gamma}-1)^n=1.$ Thus, $\rho(c)<\rho(f)=1$.  

 Since $\Delta^nc=c$, by Lemma \ref{lm3} and \(\rho(c)<1\), there exists a finite logarithmic measure \(E\) and a small positive constant \(\epsilon\) such that for \(|z| = r\notin E\),
\[
1=\left|\frac{\Delta^{n}c}{c}\right|\leq|z|^{n(\rho(c)-1+\epsilon)}\to 0,\ as\ |z|\to\infty,
\]
which is impossible unless $c$ is a constant. $\Delta^n=c$ implies $c=0$.
 This contradicts our assumption that 
$c\not\equiv0$. 
 

If $c\not\equiv b$,  using the Nevanlinna second main theorem to $e^{\beta}$, we obtain $N(r,\frac{1}{e^{\beta}+\frac{c-b}{h}})=T(r,e^{\beta})+S(r,e^{\beta})$. 
Let $z_0$ be  the
common zero of $e^{\beta}+\frac{c-b}{h}$  and $e^{\beta}+\frac{\Delta^nc-b}{\omega}$, then $z_0$ is the zero of $\frac{c-b}{h}-\frac{\Delta^nc-b}{\omega}$.
Thus, $\frac{c-b}{h}=\frac{\Delta^nc-b}{\omega}$. 
From \eqref{4.41},  we have $\frac{w^2}{h^2}=e^{\alpha}$.  Therefore, $\frac{\Delta^a(f-c)}{f-c}=\frac{\omega}{h}=e^{\frac{\alpha}{2}+k\pi i}$, where $k$ is an integer.
 Using a similar method as in the case $c\equiv b$, we can obtain $\rho(c)<\rho(f)=1$. 

  We have already assumed that $a \not\equiv c$, from $\frac{c-b}{h}=\frac{\Delta^nc-b}{\omega}$ and $\frac{c-a}{h}=\frac{\Delta^nc-a}{\omega}$, we get  
  $\frac{\Delta^nc-a}{c-a}=\frac{\Delta^nc-b}{c-b}$. This means $a(\Delta^nc-c)=b(\Delta^nc-c)$. Since $a\not\equiv b$, we get $\Delta^nc=c$. Using a similar method as in the case $c\equiv b$  impiles $c=0$.
This contradicts our assumption that 
$c\not\equiv0$.


}
\end{case}

\begin{case}
\rm{
\(N_2\neq S(r,e^{\beta})\).
 
 Using the same approach as in Case 1, we obtain $\frac{c-a}{h}=\frac{\Delta^nc-b}{\omega}$.
Substituting it into the equation \eqref{3.3}, then
\begin{align}
\label{4.42}
e^{\alpha}=\frac{\omega^{2}\left[e^{\beta}+\frac{\Delta^nc-a}{\omega}\right]}{h^{2}\left[e^{\beta}+\frac{c-b}{h}\right]}.
\end{align}

If $c\equiv b$,  we get $\Delta^nc=a$ from \eqref{4.42}. $\frac{c-a}{h}=\frac{\Delta^nc-b}{\omega}$ impels $\frac{\omega}{h}=-1$.
 Thus, 
 $\frac{\Delta^nf-\Delta^nc}{f-c}=-1
$.

When $c\not \equiv b$, using the same approach as in Case 1, we obtain then 
 $ \frac{\Delta^nc-a}{\omega}=\frac{c-b}{h}$ from \eqref{4.42}.
  $\frac{\Delta^nc-b}{c-a}=\frac{\Delta^nc-a}{c-b}$ implies $a+b=\Delta^nc+c$. Thus, $\frac{\Delta^nc-b}{c-a}=\frac{w}{h}=-1$, we obtain  
 $\frac{\Delta^nf-\Delta^nc}{f-c}=-1
$.

In summary, we  get  $\Delta^n(f-c)$ share $0$ CM with $f-c$. Since $f-c=he^\beta$,  we have $\lambda(f-c)=\rho(h)<\rho(e^\beta)=\rho(f-c)$. By Theorem \ref{th1.1}, we get $f-c =pe^{\gamma z},$  where \( p, \gamma,  \) are  non-zero constants and $(e^{\gamma}-1)^n=-1.$

 Thus, Theorem \ref{th2.1}  is proved.
}
\end{case}
\end{proof}

\subsection{\texorpdfstring{$f'$ and $f(z+1)$ share a finite set}{f' and f(z+1) share a finite set}}

In this subsection, we  employ Theorem \ref{th3} to investigate the relationship between $f'(z)$ and $f(z+1)$, under the condition that $f'$ and $f(z+1)$ share a finite set, and obtain the following result.

\vspace{4pt}

\begin{thm}
\label{th4.2}
    Suppose that  \(f \) is a transcendental entire function of hyper-order strictly less than 1 and  $a, b, c$ are mall entire functions of $f$ such that $f-c $ has finite many zeros and $a\not\equiv b$. If \(f'\) and \(f(z+c)\) share the set \(\{a, b\}\) CM,
    then  $f(z)=c(z)+p(z)e^{\gamma z}$, where $\gamma$, $p$ are   nonzero constant  and
one of the following cases holds.
	\begin{enumerate}
		\item[\rm{(i)}] $\frac{\gamma}{e^\gamma}=1$, $c=0$ and $f'(z)=f(z+1)$.
		\item[\rm{(ii)}] $\frac{\gamma}{e^\gamma}=-1$, $(f-c)'=-[f(z+1)-c(z+1)]$, $c'(z)+c(z+1)=a+b$.
	\end{enumerate}
\end{thm}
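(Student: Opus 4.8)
\textbf{Proof plan for Theorem \ref{th4.2}.}
The plan is to mimic the argument used for Theorem \ref{th2.1}, replacing the difference operator $\Delta^n$ by the pair $(f'(z),f(z+c))$ and invoking Theorem \ref{th3} in place of Theorem \ref{th1.1}. Since $f'$ and $f(z+c)$ share $\{a,b\}$ CM, we may write
\begin{align}
\label{eq:shareset42}
\frac{(f'(z)-a)(f'(z)-b)}{(f(z+c)-a)(f(z+c)-b)}=e^{\alpha},
\end{align}
for some entire function $\alpha$. Because $f-c$ has finitely many zeros and $\rho(c)<\rho(f)$, Hadamard factorization gives $f(z)=h(z)e^{\beta(z)}+c(z)$ with $h$ a polynomial and $\rho(\beta)=\rho_2(f)<1$, so that $a,b,c,h$ are all small functions of $e^{\beta}$ since $T(r,e^{\beta})=T(r,f)+S(r,f)$. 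Substituting the forms of $f'$ and $f(z+c)$ into \eqref{eq:shareset42} — noting $f'=(h'+h\beta')e^{\beta}+c'$ and $f(z+c)=h(z+c)e^{\beta(z+c)-\beta(z)}e^{\beta}+c(z+c)$ — produces an identity of the same shape as \eqref{3.2}, with $\omega_1:=h'+h\beta'$ and $\omega_2:=h(z+c)e^{\beta(z+c)-\beta(z)}$ playing the roles of the difference quotients. By the logarithmic derivative lemma and the difference logarithmic derivative lemma, both $\omega_1,\omega_2$ are small functions of $e^{\beta}$; as in the proof of Theorem \ref{th2.1} one first rules out $\omega_2\equiv0$ (and $\omega_1\equiv0$) via the second main theorem, using $a\not\equiv b$ to guarantee $c-a$ and $c-b$ are not both identically zero.

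Next I would rewrite the substituted identity in the factored form analogous to \eqref{3.3}, namely
\begin{align}
\label{eq:factored42}
e^{\alpha}=\frac{\omega_1^{2}\bigl[e^{\beta}+\tfrac{c'-a}{\omega_1}\bigr]\bigl[e^{\beta}+\tfrac{c'-b}{\omega_1}\bigr]}{\omega_2^{2}\bigl[e^{\beta}+\tfrac{c(z+c)-a}{\omega_2}\bigr]\bigl[e^{\beta}+\tfrac{c(z+c)-b}{\omega_2}\bigr]},
\end{align}
apply the second fundamental theorem to $e^{\beta}$ with the two targets $e^{\beta}+\tfrac{c(z+c)-a}{\omega_2}$ (assuming WLOG $c-a\not\equiv0$, hence $c(z+c)-a\not\equiv0$ after a short check), and split into the two cases $N_1\neq S(r,e^{\beta})$ and $N_2\neq S(r,e^{\beta})$ exactly as in Cases 1 and 2 there. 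In each case, comparing the common zeros forces one of the small-function matchings $\tfrac{c(z+c)-a}{\omega_2}=\tfrac{c'-a}{\omega_1}$ etc., which collapses \eqref{eq:factored42} to $\tfrac{\omega_1^2}{\omega_2^2}=e^{\alpha}$, i.e. $\dfrac{f'-c'}{f(z+c)-c(z+c)}=\dfrac{\omega_1}{\omega_2}=e^{\alpha/2}$. Writing $g:=f-c=he^{\beta}$ this says $g'$ and $g(z+c)$ share $0$ CM with $\lambda(g)=\rho(h)<\rho(e^{\beta})=\rho(g)$, so Theorem \ref{th3} (with $c$ there equal to $1$ — or after the routine rescaling $z\mapsto cz$ to reduce the shift step to $1$) applies to $g$ and yields $g=pe^{\gamma z}$ with $p$ a nonzero polynomial of degree $\le1$ and $\dfrac{g'}{g(z+c)}=\dfrac{\gamma}{e^{\gamma}}$; since $g'$ and $g(z+c)$ genuinely share $0$ (not just a nonzero small function), case (i) of Theorem \ref{th3} forces in fact $p$ constant, and the quotient $\gamma/e^{\gamma}$ equals the constant $e^{\alpha/2}$.

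It then remains to pin down $c$ and the value $\gamma/e^{\gamma}$. In the first case one obtains $\dfrac{c'-a}{c-a}=\dfrac{\omega_1}{\omega_2}=\dfrac{\gamma}{e^{\gamma}}$; combining with the auxiliary relation extracted from \eqref{eq:factored42} (either $c'=c(z+c)$ when $c\equiv b$, or $a(c'-c(z+c))=b(c'-c(z+c))$ hence $c'=c(z+c)$ when $c\not\equiv b$, using $a\not\equiv b$) gives $c'=c(z+c)$; by the standard fact that a nonconstant entire solution of $c'(z)=c(z+c)$ has order $\ge1$ (see \cite[p.~96, Remark 5.1.9]{Liu1}), contradicting $\rho(c)<\rho(f)=1$, $c$ must be a polynomial, and $c'=c(z+c)$ then forces $c=0$; this is case (i), with $\gamma/e^{\gamma}=1$ and $f'=f(z+1)$. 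In the second case the matchings give $\dfrac{f'-c'}{f(z+c)-c(z+c)}=\dfrac{c'-b}{c(z+c)-a}=\dfrac{c'-a}{c(z+c)-b}$, which yields $c'(z)+c(z+c)=a+b$ and $\gamma/e^{\gamma}=-1$ (with $\Delta$-analogue $c\equiv b$, $c'=a$ as a degenerate subcase); this is case (ii). The main obstacle is bookkeeping: making sure the second-main-theorem estimates go through with $\omega_1,\omega_2$ small (this is where $\rho_2(f)<1$ is essential, via the difference logarithmic derivative lemma) and correctly matching the degenerate subcases $c\equiv b$ versus $c\not\equiv b$ so that in every branch one lands on exactly case (i) or case (ii); the application of Theorem \ref{th3} and the order-$\ge1$ rigidity of $c'(z)=c(z+c)$ are then routine.
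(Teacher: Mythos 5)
Your proposal follows essentially the same route as the paper: the paper's own proof of Theorem \ref{th4.2} consists of setting up \eqref{4.11}--\eqref{4.13} with $w_1=h'+h\beta'$ and $w_2=h(z+1)e^{\beta(z+1)-\beta(z)}$ and then declaring the rest ``similar to Theorem \ref{th2.1}''; you carry out exactly that template, including the second-main-theorem dichotomy $N_1\neq S(r,e^\beta)$ versus $N_2\neq S(r,e^\beta)$, the small-function matchings, the reduction to $g'=f'-c'$ and $g(z+1)$ sharing $0$ CM, the appeal to Theorem \ref{th3}, and the rigidity of $c'(z)=c(z+1)$ to force $c=0$ or $c'+c(z+1)=a+b$. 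The overall plan is sound and the bookkeeping you describe is the same bookkeeping the paper silently delegates to the proof of Theorem \ref{th2.1}.

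One justification in your write-up is wrong, though the conclusion is recoverable from material you derive later. You assert that because $g'$ and $g(z+1)$ ``genuinely share $0$,'' case (i) of Theorem \ref{th3} forces $p$ to be constant. It does not: case (i) of Theorem \ref{th3} explicitly allows $\deg(p)=1$ with $\beta=1$ (the paper's example $f(z)=ze^{z}$, where $f'/f(z+1)=1/e$, sits squarely in that case). The correct reason $p$ is constant is the one your final paragraph makes available but does not connect: the matching relations force $\gamma/e^{\gamma}=e^{\alpha/2}$ to equal $1$ (Case 1) or $-1$ (Case 2), whereas $\deg(p)=1$ would force $\gamma=1$ and hence $\gamma/e^{\gamma}=1/e\notin\{1,-1\}$; therefore $\deg(p)=0$. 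You should replace the ``genuinely share $0$'' argument with this one. (A cosmetic point you inherit from the paper: the symbol $c$ is used both for the exceptional small function and for the shift; the shift is really $1$, as in \eqref{4.11}, so the rescaling you mention is unnecessary.)
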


\vspace{4pt}

\begin{rem}
    If $a=b$, then Theorem \ref{th3} gives the relationship between $f'(z)$ and $f(z+1)$.
\end{rem}

\vspace{4pt}

If $\frac{\gamma}{e^\gamma}=1$ and $c=0$, then  $f'(z)=f(z+1)$ from  $f(z)=pe^{\gamma z}$. Therefore,  \(f'\) and \(f(z+c)\) share the set \(\{a, b\}\) CM. That is  the example of Theorem \ref{th4.2} (i).  The following example is given to show that case (ii) of Theorem \ref{th4.2} may occur.
\vspace{4pt}

\begin{exa}
Let $f(z)=z+e^{\gamma z}$, $a=z$ and $b=2$ such that $e^{\gamma}=-\gamma$. Then $f'(z)=1+e^{\gamma z}\gamma$, $f(z+1)=e^{\gamma} e^{\gamma z}+z+1$, $\frac{(f'-a)(f'-b)}{(f(z+1)-a)(f(z+1)-b)}=\frac{(\gamma e^{\gamma z}+1-z)(\gamma e^{\gamma z}-1)}{(e^{\gamma}e^{\gamma z}+1)(e^{\gamma}e^{\gamma z}+z-1)}=-1$.
\end{exa}

\begin{proof}[Proof of Theorem \ref{th4.2}]
    \rm{
Since \(f'\) and \(f(z+c)\) share the set \(\{a, b\}\) CM,  then
\begin{align}
\label{4.11}
\frac{(f' - a)(f' - b)}{(f(z+1) - a)(f(z+1) - b)} = e^{\alpha},
\end{align}
where \(\alpha\) is an entire function.
By the assumption $f-c$ has finitely many zeros and Hadamard factorization theorem,  suppose that \(f(z)=h(z)e^{\beta(z)}+c(z)\), where \(h(\not\equiv 0)\)  is a polynomial and \(\beta\) is an entire functions satisfying
$~\rho(\beta)=\rho_2(f)<1.$  Then $a,b,c, h$ are small functions of $e^{\beta}$ by $T(r,e^\beta)=T(r,f)+S(r,f)$.

Substituting the forms of \(f'\) and \(f(z+1)\) into \eqref{4.11} yields that
\begin{align}
\label{4.12}
\frac{[(h'+h\beta')e^\beta+c'-a][(h'+h\beta')e^\beta+c'-b]}{[h(z+1)e^{\beta(z+1)}+c(z+1)-a][h(z+1)e^{\beta(z+1)}+c(z+1)-b]}=e^\alpha.
\end{align}

Let $w_1=h'+h\beta'$ and $w_2=h(z+1)e^{\beta(z+1)-h(z)}$, then we rewrite \eqref{4.12} as following
\begin{align}
\label{4.13}
\frac{(e^{\beta}+\frac{c'-a}{w_1})(e^{\beta}+\frac{c'-b}{w_1})w_1^2}{(e^{\beta}+\frac{c(z+1)-a}{w_2})(e^{\beta}+\frac{c(z+1)-b}{w_2})w_2^2}=e^{\alpha}.
\end{align}

The following proof is similar to that of Theorem \ref{th2.1},  we  can  employ Theorem \ref{th3} to obtain  that $f(z)=c(z)+p(z)e^{\gamma z}$, where $\gamma$ is a constant, $p$  is a polynomial of degree $k\le 1$,  and
one of the following cases holds.
	\begin{enumerate}
		\item[\rm{(i)}] $\frac{\gamma}{e^\gamma}=1$, $c=0$ and $f'(z)=f(z+1)$.
		\item[\rm{(ii)}] $\frac{\gamma}{e^\gamma}=-1$, $(f-c)'=-[f(z+1)-c(z+1)]$, $c'(z)+c(z+1)=a+b$.
	\end{enumerate}
What's more, if $k=1$,  then \( \gamma = 1 \).  However,  since $\frac{\gamma}{e^\gamma}\not=1$ or $-1$  when $\gamma=1$,  we  get $k=0$, which means $p$ is a constant.

}
\end{proof}

\section*{Declarations}
\begin{itemize}
\item \noindent{\bf Funding}
Jianren Long was supported by the National Natural Science Foundation of China (Grant No. 12261023, 11861023) and Xuxu Xiang was supported by Graduate Research Fund Project of Guizhou Province (2025YJSKYJJ107).

\item \noindent{\bf Conflicts of Interest}
The authors declare that there are no conflicts of interest regarding the publication of this paper.

\item\noindent{\bf Author Contributions}
All authors contributed to the study conception and design. All authors read and approved the final manuscript.

\end{itemize}

\end{document}